\def \x{\textbf{x}}
\def \y{\textbf{y}}
\def \T{\mathbb{T}_c}
\def \erre{\mathbb{R}}
\def \PP{\mathcal{P}}
\def \BB{\mathcal{B}}
\def \Pimn{\Pi(\mu,\nu)}
\def \erre{\mathbb{R}}
\def \proj#1{(\mathfrak{p}_{#1})_\#}
\def \PP{\mathcal{P}}
\def \mn{(\mu,\nu)}
\newtheorem{theorem}{Theorem}
\newtheorem{corollary}{Corollary}
\newtheorem{lemma}[theorem]{Lemma}
\newtheorem{remark}{Remark}
\newtheorem{example}{Example}
\newtheorem{definition}{Definition}
\title{On the Structure of Optimal Transportation Plans between Discrete Measures}
\author{G. Auricchio and M. Veneroni}
\begin{document}

\maketitle

\begin{abstract}
  In this paper, we prove a structure theorem for discrete optimal transportation plans.  We show that, given any pair of discrete probability measures and a cost function, there exists an optimal transportation plan that can be expressed as the sum of two deterministic plans. As an application, we estimate the infinity-Wasserstein distance between two discrete probability measures $\mu$ and $\nu$ with the $p$-Wasserstein distance, times a constant depending on $\mu$, $\nu$, and the fixed cost function.
\end{abstract}
\vspace{1cm}
\small
\noindent Keywords: Wasserstein distance, discrete optimal transport, uniform estimates.

\vspace{0.25 cm}
\noindent AMS: 49Q22, 05C70, 39B62.

\normalsize

\section{Introduction}

The Optimal Transport (OT) problem is a classical minimization problem dating back to the work of Monge \cite{Monge1781} and Kantorovich \cite{kantorovich2006translocation,kantorovich1960mathematical}. In this problem, we are given two probability measures, namely $\mu$ and $\nu$, and we search for the cheapest way to reshape $\mu$ into $\nu$. The effort needed in order to perform this transformation depends on a cost function, which describes the underlying geometry of the product space of the support of the two measures. In the right setting, this effort induces a distance between probability measures.

During the last century, the OT problem has been fruitfully used in many applied fields such as the study of systems of particles by Dobrushin \cite{dobrushin1989dynamical}, the Boltzmann equation by Tanaka in \cite{Tanaka1978,Tanaka1973,Murata1974}, and the field of fluidodynamics by Yann Brenier \cite{Brenier1991}. All these results pointed out that
, by a qualitative description of optimal transport, it was possible to gain insightful information on many open problems. For this reason, the Optimal Transport problem has become a topic of major interest for analysts, probabilists and statisticians \cite{Villani2008,AGS,Santambrogio2015}. In particular, a plethora of results concerning the uniqueness \cite{uniqueness,10.2307/827090,figalli2007existence}, the structure \cite{struct,CUESTAALBERTOS199786,RePEc:eee:jmvana:v:32:y:1990:i:1:p:48-54}, and the regularity \cite{loeper2009regularity,bouchitte07} of the optimal transportation plan in the continuous framework has been proved.

In recent years, it has also become a crucial sub-problem in several applications in Computer Vision \cite{Pele2009,Rubner1998,Rubner2000}, Computational Statistic \cite{Levina2001}, Probability \cite{Bassetti2005,RePEc:eee:stapro:v:76:y:2006:i:12:p:1298-1302}, and Machine Learning \cite{pmlr-v70-arjovsky17a,pmlr-v32-solomon14,Frogner2015,Cuturi2014}. However, in these fields, the measures $\mu$ and $\nu$ are discrete, and therefore the optimal transportation plans lack most of the good properties their continuous counterparts enjoy.

In this paper, we study the structure of optimal transportation plans between discrete probability measures. 
After introducing the notion of \emph{trim plan}, between the measures $\mu$ and $\nu$, we prove that such plans are the sum of two deterministic plans, i.e., plans that are induced by the action of two suitable push-forward maps. The first map acts on a portion $\mu^{(d)}$ of $\mu$, while the other one acts on a portion $\nu^{(d)}$ of $\nu$ (Theorem \ref{th:linf_ext}). Thanks to this formula, we recover an extension of the estimate given in \cite{bouchitte07}. Namely, we estimate the \emph{infinity}-Wasserstein distance between a pair of discrete measures $(\mu,\nu)$ (see Definition \ref{def_winfty} below), by the $c$-Wasserstein distance between $\mu$ and $\nu$ times a quantity that only depends on $\mu$ and $\nu$ (Theorem \ref{thm:extensionBJM}).

\section{Basic Notions on Optimal Transport}

In this section, following \cite{Villani2008}, we recall the main definitions regarding optimal transportation and we examine the continuous counterpart \cite{bouchitte07} to our $W^\infty$ estimate.

Given a polish space $(X,d)$, we denote with $\BB(X)$ the set of Borel sets over $X$, while with $\PP(X)$ we denote the set of Borel measures over $X$. Given a Borel measurable function $T:X\to Y$, we denote with $T_\#:\PP(X)\to\PP(Y)$ the push-forward operator induced by $T$, defined by: $(T_\#\mu)[A]=\mu[T^{-1}(A)]$.
The projection maps are $\mathfrak{p}_X:X\times Y \to X$, $\mathfrak{p}_X(x,y)=x$ and $\mathfrak{p}_Y:X\times Y \to Y$,  $\mathfrak{p}_Y(x,y)=y$.

\begin{definition}
Let $\mu$ and $\nu$ be two measures over two polish spaces $X$ and $Y$. The probability measure $\pi \in \PP(X\times Y)$ is a transportation plan between $\mu$ and $\nu$ if
\[
\proj{X}\pi=\mu \quad \quad \text{and} \quad \quad \proj{Y}\pi=\nu.
\]
We denote with $\Pimn$ the set of all the transportation plans between $\mu$ and $\nu.$
\end{definition}

\noindent Given $A\in\BB(X)$ and $B\in\BB(Y)$, the quantity $\pi(A\times B)$ is the amount of mass that travels from the set $A$ to the set $B$. By assigning a cost function $c$ on $X\times Y$ we specify a way to measure the cost of every transportation plan. 

\begin{definition}
\label{def:transp_funct}
Let $\mu\in\PP(X)$, $\nu\in\PP(Y)$, and let $c:X\times Y\to [0,+\infty)$ be a lower semicontinuous (l.s.c.) symmetric cost function. The transportation functional $\T:\Pimn\to [0,+\infty)$ is defined as
\begin{equation}
    \label{eq:transportfunct}
    \T(\pi):=\int_{X\times Y}c\ {\rm d}\pi.
\end{equation}
\end{definition}

\noindent Given two measures $\mu\in\PP(X)$, $\nu\in\PP(Y)$, and a cost function $c$, the optimal transportation problem consists in finding the infimum of $\T$ over $\Pimn$, i.e.
\begin{equation}
\label{eq:transportcost}
    \inf_{\pi \in\Pimn} \T(\pi).
\end{equation}

\noindent By making further assumptions on $c$, it is possible to prove that the infimum in \eqref{eq:transportcost} is actually a minimum. In particular, when the cost function is non negative, the solution exists. For a complete discussion on the existence of the solution, we refer to \cite[Chapter 4]{Villani2008}.\\

\noindent We can use the optimal transportation problem to define a distance over the space $\PP(X)$. In particular, since $X$ is a polish space, we can lift the distance $d$ from $X$ to $\PP(X)$, by choosing $d$ as a cost function in \eqref{eq:transportfunct}.

\begin{definition}
\label{def:Wasdist}
Let $(X,d)$ be a polish space and $p\in [1,\infty)$. The Wasserstein distance of order $p$ between the probability measures $\mu$ and $\nu$ on $X$ is defined as
\begin{equation}
\label{eq:W_p}
    W_p(\mu,\nu):=\Big(\inf_{\pi \in \Pimn}
        \mathbb{T}_{d^p}(\pi)\Big)^{\frac{1}{p}}
    =\Big(\inf_{\pi \in \Pimn}
        \int_{X\times Y}d^p(x,y) {\rm d}\pi(x,y)\Big)^{\frac{1}{p}}.
\end{equation}
When $p=1$, the $1$-Wasserstein distance is also known as Kantorovich-Rubinstein distance.
\end{definition}

When the cost function is not the space distance $d$, we denote the infimum in \eqref{eq:transportcost} with $W_c\mn$.

\begin{remark}
The infimum in \eqref{eq:W_p} could actually be $+\infty$, it is thus customary to restrict $W_p$ to the space of probability measures with finite $p$-moments. 
\end{remark}

\begin{definition}
\label{def_winfty}
Given a cost function $c$, the  $W^{(\infty)}_c$ distance between two measures $\mu$ and $\nu$ is defined as
\begin{equation*}
    W^{(\infty)}_c(\mu,\nu)=\inf_{\pi \in \Pi(\mu,\nu)}|| c ||_{L^{\infty}_\pi} 
\end{equation*}
where $||\,\cdot\,||_{L^{\infty}_\pi}$ is the $L^{\infty}$ norm with respect to the measure $\pi$. When $c$ is the Euclidean distance, we use the notation: $W^{(\infty)}$.
\end{definition}

\noindent Let $\mu$ and $\nu$ be two probability measures on a Lipschitz regular and bounded subset $\Omega\subset\erre^n$. We define the cost function
\[
c_p(\x,\y):=\Bigg(\sqrt{\sum_{i=1}^n|x_i-y_i|^2}\Bigg)^{p}, \quad \quad p>1.
\]
When $\mu$ is absolutely continuous with respect to the Lebesgue measure, it is well known (Theorem 6.3 and Theorem 6.4, \cite{uniqueness}) 
that the optimal transportation plan $\pi$ between $\mu$ and $\nu$ is unique and it is induced by a transportation map $T_p$, i.e.
\[
\pi=(Id,T_p)_\#\mu.
\]
In \cite{bouchitte07}, Bouchitté et al. established an $L^{\infty}_\mu$-bound on the displacement map $Id-T_p$, which only depends on the shape of $\Omega$, on $p$ and on the density of $\mu$. This estimate allowed the authors to give the following upper bound on the $W^{(\infty)}$ distance between $\mu$ and $\nu$.
 
\begin{theorem}[Theorem 1.2, \cite{bouchitte07}]
\label{thm:bouchitte}
Let $\Omega$ be a bounded connected open subset of $\erre^n$ with Lipschitz boundary and denote by $\PP(\overline{\Omega})$ (resp. $\PP_{ac}(\Omega)$) the set of Borel (resp. absolutely continuous) probability measures on $\overline{\Omega}$. Then, for every $p>1$ and every pair $(\mu,\nu)\in\PP_{ac}(\Omega)\times \PP(\overline{\Omega})$ there holds
\begin{equation}
\label{eq:bouchitteestimation}
    (W^{(\infty)}(\mu,\nu))^{p+n}\leq C_{p,n}(\Omega)||f^{-1}||_{L^{\infty}(\Omega)}W^p_p(\mu,\nu),
\end{equation}
where $f$ is the density of $\mu$ with respect to the Lebesgue measure and $C_{p,n}(\Omega)$ is a positive constant depending only on $p,n$, and $\Omega$.
\end{theorem}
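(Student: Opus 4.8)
The plan is to bound the essential supremum of the displacement of the $W_p$-optimal map and to show that a single large displacement is forced, by monotonicity and the geometry of $\Omega$, to cost a definite amount of $W_p^p$. Fix $p>1$ and the cost $c_p$, and let $\pi$ minimize $\mathbb{T}_{c_p}$ over $\Pimn$; by the cited Theorems~6.3--6.4 of \cite{uniqueness} (since $\mu\in\PP_{ac}(\Omega)$ and $c_p$ is strictly convex in $x-y$) the plan is induced by a map $T_p$, i.e. $\pi=(Id,T_p)_\#\mu$, and is concentrated on a $c$-cyclically monotone set $\Gamma$. Set $R:=\|Id-T_p\|_{L^\infty_\mu}$. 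This same $\pi$ is admissible in Definition~\ref{def_winfty}, and its $L^\infty_\pi$ Euclidean cost equals $\operatorname*{ess\,sup}_{\mu}|x-T_p(x)|=R$, so $W^{(\infty)}(\mu,\nu)\le R$; it therefore suffices to prove $R^{p+n}\le C_{p,n}(\Omega)\,\|f^{-1}\|_{L^\infty(\Omega)}\,W^p_p(\mu,\nu)$. Finally, $\|f^{-1}\|_{L^\infty(\Omega)}<\infty$ yields the lower density bound $f\ge \|f^{-1}\|_{L^\infty(\Omega)}^{-1}$ a.e.\ on $\Omega$, which will convert Lebesgue volume into $\mu$-mass in the final step.

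The engine of the proof is a quantitative monotonicity estimate. Given $\varepsilon>0$, choose $x_0\in\Omega$ with $(x_0,y_0)\in\Gamma$, $y_0=T_p(x_0)$ and $|x_0-y_0|>R-\varepsilon$ (such $x_0$ form a set of positive $\mu$-measure). Let $v=(y_0-x_0)/|x_0-y_0|$ and, for $s\in(0,|x_0-y_0|)$, set $x=x_0+sv$, $y=T_p(x)$. Two-point $c$-monotonicity for $\{(x_0,y_0),(x,y)\}$ reads
\[
|x_0-y_0|^p+|x-y|^p\le |x_0-y|^p+|x-y_0|^p ,
\]
and, since $|x-y_0|=|x_0-y_0|-s$, discarding $|x-y|^p\ge 0$ gives
\[
|x_0-y|\ge\bigl(|x_0-y_0|^p-(|x_0-y_0|-s)^p\bigr)^{1/p}.
\]
Combined with $|x-y|\ge|x_0-y|-s$, choosing $s=\theta R$ for a fixed fraction $\theta=\theta(p)\in(0,1)$ makes $|x-T_p(x)|\ge \kappa_2(p)\,R$ with $\kappa_2(p)>0$ for every $p>1$ (the constant degenerating only as $p\downarrow 1$, in agreement with the hypothesis $p>1$). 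Running the same computation with $x$ ranging over a ball $B(x_0+\theta R\,v,\rho R)$, for a suitable $\rho=\rho(p)$, shows that $|x-T_p(x)|\ge \kappa_2(p)\,R$ holds on this entire ball.

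It remains to upgrade this pointwise-on-a-ball bound to a volume bound \emph{inside} $\Omega$. If $\Omega$ were convex, the segment $[x_0,y_0]$ and a tube of radius $\sim R$ about it would lie in $\overline\Omega$, producing at once a set $A\subseteq\Omega$ with $|A|\ge \kappa_1(n)\,R^n$ on which $|x-T_p(x)|\ge\kappa_2(p)\,R$. For a general bounded connected Lipschitz domain the tube may protrude from $\Omega$, and \emph{this is the genuine obstacle}: one must guarantee that a definite portion of it, of Lebesgue measure $\ge \kappa_1(n,\Omega)\,R^n$, remains in $\Omega$, which is where the interior cone (corkscrew) property furnished by the Lipschitz boundary enters and where the dependence of the constant on $\Omega$ originates. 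Granting such $A$, the density bound gives
\[
W^p_p(\mu,\nu)=\int_\Omega|x-T_p(x)|^p\,f(x)\,dx\ \ge\ (\kappa_2(p)R)^p\,\frac{|A|}{\|f^{-1}\|_{L^\infty(\Omega)}}\ \ge\ \frac{\kappa_1\kappa_2^p}{\|f^{-1}\|_{L^\infty(\Omega)}}\,R^{n+p}.
\]
Rearranging yields $R^{n+p}\le C_{p,n}(\Omega)\,\|f^{-1}\|_{L^\infty(\Omega)}\,W^p_p(\mu,\nu)$ with $C_{p,n}(\Omega)=(\kappa_1\kappa_2^p)^{-1}$; letting $\varepsilon\downarrow 0$ and using $W^{(\infty)}(\mu,\nu)\le R$ completes the argument. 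I expect the monotonicity step to be essentially routine and the geometric volume bound inside an arbitrary Lipschitz domain to be the hard part.
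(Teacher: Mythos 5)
First, a contextual point: the paper does not prove this statement at all. Theorem \ref{thm:bouchitte} is quoted from \cite{bouchitte07} purely as motivation, and the paper's own contribution (Theorem \ref{thm:extensionBJM}) is a discrete analogue obtained by entirely different, combinatorial means. So your proposal can only be measured against the original argument of Bouchitt\'e--Jimenez--Mahadevan, whose opening moves you have reconstructed correctly: reduce $W^{(\infty)}$ to the essential supremum $R$ of the displacement of the optimal map, use two-point $c$-cyclical monotonicity to force displacement $\geq\kappa_2(p)R$ on a region of diameter comparable to $R$, and integrate against the lower bound on $f$. Your monotonicity computation is sound: since $(1-\theta)^p+\theta^p<1$ for $p>1$ and $\theta\in(0,1)$, the quantity $(1-(1-\theta)^p)^{1/p}-\theta$ is strictly positive, and the estimate does persist on a small ball $B(x_0+\theta R v,\rho R)$; the final integration step is also fine \emph{given} the set $A$.

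The genuine gap is precisely the step you ``grant,'' and the mechanism you propose for it cannot close it. Your outline uses only three ingredients: two-point monotonicity, the density lower bound, and the interior cone (corkscrew) property of $\partial\Omega$. It never uses connectedness of $\Omega$ --- but the theorem is false without it, so no argument of this shape can succeed. Concretely, take $\Omega=B(0,1)\cup B(Me_1,1)$ with $M$ large, $\mu$ uniform on $\Omega$, and $\nu:=(1-2\epsilon)\mu\llcorner B(0,1)+(1+2\epsilon)\mu\llcorner B(Me_1,1)$: every plan must move mass $\epsilon$ between the balls, so $W^{(\infty)}(\mu,\nu)\geq M-2$, while the plan that moves only that excess gives $W_p^p(\mu,\nu)\leq\epsilon(M+2)^p\to 0$ as $\epsilon\to 0$, violating \eqref{eq:bouchitteestimation} for any fixed constant; this disconnected $\Omega$ is Lipschitz and satisfies the corkscrew condition, and it is exactly the continuous counterpart of Example 1 in the paper. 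Moreover, even for connected $\Omega$ the two sets you need to intersect can be disjoint: your monotonicity engine controls displacements only in the forward tube from $x_0$ toward $y_0$, whereas the interior cone at $x_0$ points in a direction dictated by the boundary. In a horseshoe-shaped domain with $x_0$ and $y_0$ at the two tips, the ball $B(x_0+\theta Rv,\rho R)$ and the whole forward tube lie in the gap, outside $\overline{\Omega}$, while $\Omega\cap B(x_0,\rho R)$ lies entirely ``behind'' $x_0$, where monotonicity says nothing. Bridging this mismatch --- propagating the displacement lower bound along a chain inside $\Omega$ from $x_0$ toward $y_0$, which is where connectedness and the Lipschitz character must act together --- is not a technicality but the actual content of the proof in \cite{bouchitte07}.
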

 
 \noindent The proof of this result heavily relies on the regularity of $\mu$, hence, when $\mu$ and $\nu$ are both discrete, this result does not apply. In particular, we are no longer able to find a constant depending only on $\mu$ and the geometry of the support of $\mu$, as the following example shows.
 
 \begin{example}
Let $\mu,\nu_\epsilon\in \PP(\erre)$ be defined as
\[
\mu=\dfrac{1}{2}\delta_{0}+\dfrac{1}{2}\delta_{1},\quad\quad\quad \nu_\epsilon=\dfrac{1-\epsilon}{2}\delta_{0}+\dfrac{1+\epsilon}{2}\delta_{1},
\]
for $\epsilon \in (0,1)$, and let $c_2(x,y)=|x-y|^2$. By a simple computation we have that
\[
W^{(\infty)}_2(\mu,\nu_\epsilon)=1,\quad\quad\quad W^2_2(\mu,\nu_\epsilon)=\dfrac{\epsilon}{2}.
\]
Hence, estimate \eqref{eq:bouchitteestimation} does not hold true, as for every constant $C(p,n,\Omega,\mu)>0$ (possibly depending on $p,n,\Omega,\mu$), there exists $\epsilon >0$ such that
\[
   (W^{(\infty)}(\mu,\nu_\epsilon))^{2+1}=1 > C(p,n,\Omega,\mu) W^2_2(\mu,\nu)=\epsilon C(p,n,\Omega,\mu).
\]
\end{example}

\section{Structure of discrete optimal transportation plans}

In what follows, we prove the existence of an optimal transportation plan between two discrete measures that is induced by the action of two push-forward functions, one going from $X$ to $Y$ and one going from $Y$ to $X$. This allows us to establish a bound on $W^{(\infty)}(\mu,\nu)$, similar to the one proved in \cite{bouchitte07}. We always assume $\# X=\# Y =n \in \mathbb{N}$. In this case, we can identify the sets $X$ and $Y$ with $\{1,\dots, n\}$. Without loss of generality, we therefore assume $X=Y$. In this setting, a measure $\mu \in \PP(X)$ has the form $\sum_{x\in X} \mu_x\delta_x$, we thus use the notation $\mu_x$ to denote the coefficient of $\mu$ in $x$ and, likewise, $c_{x,y}$ (resp. $\pi_{x,y}$) stands for the value of $c:X\times Y \to \erre$ (resp. the coefficient of $\pi\in\PP(X\times Y)$) in the point $(x,y)\in X\times Y$.
 
\begin{definition}
Let $\mu,\nu \in \PP(X)$ be two measures on a discrete set $X$ and let $c:X\times X\to \erre$ be a cost function. A minimal solution $\pi^*$ of the transportation problem 
is said to be \emph{trim} if
\[
\# {\rm spt}(\pi^*)\leq \# {\rm spt} (\pi)
\]
for each optimal solution $\pi$.
\end{definition}

\begin{lemma}
\label{lm:trimmingered}
Let $\pi\in\Pi(\mu,\nu)$ be a trim solution. Then each restriction of $\pi$ is a trim solution for its marginals. In particular, if $\pi^{(1)}$ and $\pi^{(2)}$ are such that
\[
\pi=\pi^{(1)}+\pi^{(2)}
\]
and ${\rm spt}(\pi^{(1)})\cap {\rm spt}(\pi^{(2)})=\emptyset$, then $\pi^{(1)}$ and $\pi^{(2)}$ are trim solutions for their marginals.
\end{lemma}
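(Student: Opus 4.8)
The plan is to argue by contradiction via a cut-and-paste (surgery) of transport plans, exploiting that both the transport cost and the cardinality of the support behave additively along the splitting $\pi=\pi^{(1)}+\pi^{(2)}$. First I would reduce the general statement to the ``in particular'' clause: any restriction of $\pi$ to a subset $S$ of ${\rm spt}(\pi)$ is exactly a summand $\pi^{(1)}:=\pi|_S$ in a decomposition $\pi=\pi^{(1)}+\pi^{(2)}$ with $\pi^{(2)}:=\pi-\pi^{(1)}$ and ${\rm spt}(\pi^{(1)})\cap{\rm spt}(\pi^{(2)})=\emptyset$. It therefore suffices to treat $\pi^{(1)}$, the case of $\pi^{(2)}$ being symmetric. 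Writing $\mu^{(i)}:=(\mathfrak{p}_X)_\#\pi^{(i)}$ and $\nu^{(i)}:=(\mathfrak{p}_Y)_\#\pi^{(i)}$, linearity of the push-forward gives $\mu=\mu^{(1)}+\mu^{(2)}$ and $\nu=\nu^{(1)}+\nu^{(2)}$, while linearity of the integral gives $\mathbb{T}_c(\pi)=\mathbb{T}_c(\pi^{(1)})+\mathbb{T}_c(\pi^{(2)})$.

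Next I would establish that $\pi^{(1)}$ is optimal for the pair $(\mu^{(1)},\nu^{(1)})$. If it were not, there would be some $\sigma\in\Pi(\mu^{(1)},\nu^{(1)})$ with $\mathbb{T}_c(\sigma)<\mathbb{T}_c(\pi^{(1)})$. Then the glued plan $\sigma+\pi^{(2)}$ has marginals $\mu^{(1)}+\mu^{(2)}=\mu$ and $\nu^{(1)}+\nu^{(2)}=\nu$, so it lies in $\Pi(\mu,\nu)$, yet $\mathbb{T}_c(\sigma+\pi^{(2)})=\mathbb{T}_c(\sigma)+\mathbb{T}_c(\pi^{(2)})<\mathbb{T}_c(\pi^{(1)})+\mathbb{T}_c(\pi^{(2)})=\mathbb{T}_c(\pi)$, contradicting the optimality of $\pi$.

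For trimness I would proceed similarly: suppose $\pi^{(1)}$ is optimal but not trim for $(\mu^{(1)},\nu^{(1)})$, so that there is an optimal $\sigma\in\Pi(\mu^{(1)},\nu^{(1)})$ with $\#\,{\rm spt}(\sigma)<\#\,{\rm spt}(\pi^{(1)})$. The glued plan $\sigma+\pi^{(2)}$ again belongs to $\Pi(\mu,\nu)$ and is optimal, since $\mathbb{T}_c(\sigma+\pi^{(2)})=\mathbb{T}_c(\sigma)+\mathbb{T}_c(\pi^{(2)})=\mathbb{T}_c(\pi^{(1)})+\mathbb{T}_c(\pi^{(2)})=\mathbb{T}_c(\pi)$, the optimality of $\sigma$ ensuring $\mathbb{T}_c(\sigma)=\mathbb{T}_c(\pi^{(1)})$.

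The step that needs care is the support count. On one side I use the union bound $\#\,{\rm spt}(\sigma+\pi^{(2)})\leq\#\,{\rm spt}(\sigma)+\#\,{\rm spt}(\pi^{(2)})$; on the other side, the disjointness of the original summands gives the exact identity $\#\,{\rm spt}(\pi)=\#\,{\rm spt}(\pi^{(1)})+\#\,{\rm spt}(\pi^{(2)})$. Combining these with the strict inequality $\#\,{\rm spt}(\sigma)<\#\,{\rm spt}(\pi^{(1)})$ yields $\#\,{\rm spt}(\sigma+\pi^{(2)})<\#\,{\rm spt}(\pi)$, contradicting the trimness of $\pi$ and completing the argument. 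The only genuine subtlety here is that re-gluing $\sigma$ with $\pi^{(2)}$ may create overlapping support (the two need not be disjoint anymore); but any such overlap can only decrease the count, so the decisive inequality still runs in the favorable direction.
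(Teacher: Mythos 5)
Your proposal is correct and follows essentially the same cut-and-paste argument as the paper: replace the restriction by a putative better plan $\sigma$, glue it back onto the complementary piece, observe the glued plan is optimal in $\Pi(\mu,\nu)$, and derive a contradiction from the support count $\#\,{\rm spt}(\sigma+\pi^{(2)})\leq\#\,{\rm spt}(\sigma)+\#\,{\rm spt}(\pi^{(2)})<\#\,{\rm spt}(\pi)$. The only cosmetic difference is that the paper cites Villani's restriction theorem (Theorem 4.6) for the optimality of the restricted plan, whereas you prove it inline by the same gluing argument, which makes your version slightly more self-contained.
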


\begin{proof}
Let $\pi^{*}$ be a restriction of $\pi$. By Theorem 4.6 (Chapter 4,  \cite{Villani2008}), 
we know that $\pi^*$ is optimal between its marginals, hence we only need to prove that its support has minimal cardinality. 

Arguing by contradiction, let us assume that $\pi^*$ is not trim, hence there exists another optimal plan $\eta$ between the marginals of $\pi^*$ such that
\[
\#{\rm spt}(\eta)<\#{\rm spt}(\pi^*).
\]
We can define the measure $\hat{\pi}$ as
\[
\hat{\pi}=\pi-\pi^* +\eta,
\]
since $\pi\geq \pi^*$ and $\eta\geq 0$, we have $\hat{\pi}\geq 0$. Moreover, since $\pi^*$ and $\eta$ have the same marginals, $\hat{\pi}$ has the same marginals of $\pi$, therefore $\hat{\pi}\in\Pi(\mu,\nu)$.
Moreover, since $\pi^*$ and $\eta$ are optimal between their marginals, we have
\[
\sum_{(x,y)\in X\times X}c_{x,y}\pi^*_{x,y}=\sum_{(x,y)\in X\times X}c_{x,y}\eta_{x,y},
\]
thus
\begin{eqnarray*}
\sum_{(x,y)\in X\times X}c_{x,y}\hat{\pi}_{x,y}&=&\sum_{(x,y)\in X\times X}c_{x,y}\pi_{x,y}-\sum_{(x,y)\in X\times X}c_{x,y}\pi^*_{x,y}\\
&\quad&+\sum_{(x,y)\in X\times X}c_{x,y}\eta_{x,y}\\
&=&\sum_{(x,y)\in X\times X}c_{x,y}\pi_{x,y}.
\end{eqnarray*}
In particular, $\pi$ and $\hat{\pi}$ have the same cost, therefore $\hat{\pi}$ is an optimal transportation plan between $\mu$ and $\nu$.

To conclude, we notice that, since $\pi^*$ is a restriction of $\pi$, we have
\[
\#{\rm spt}(\pi)=\#{\rm spt}(\pi-\pi^*)+\#{\rm spt}(\pi^*)>\#{\rm spt}(\pi-\pi^*)+\#{\rm spt}(\eta)\geq \#{\rm spt}(\hat{\pi}),
\]
which concludes the contradiction, since $\pi$ is trim by hypothesis.
\end{proof}

 Theorem 6.3 in \cite{uniqueness} states that, whenever $\mu$ is an absolutely continuous measure supported over a compact set $\Omega \subset \erre^n$ and the cost function $c$ is a strictly convex function of the euclidean distance, the optimal transportation plan is induced by a transportation map, regardless of the regularity of $\nu$. When $\mu$ and $\nu$ are both discrete, this result is generally false. However, in the next Theorem \ref{th:linf_ext}, we show that there exists at least one optimal transportation plan between two measures that can be recreated as the action of two functions, one acting from a subset $\tilde{X}\subset {\rm spt}(\mu)$ to $ {\rm spt}(\nu)$ and one acting from a subset $\tilde{Y}\subset {\rm spt}(\nu)$ to ${\rm spt}(\mu)$.

\begin{theorem}
\label{th:linf_ext}
Let $X$ be a discrete polish space and let $\mu$ and $\nu$ be two positive measures over the set $X$ such that
\[
\mu_{a}>0 \quad\quad\quad\forall a \in X,
\]
\[
\nu_b>0 \quad\quad\quad \forall b \in X,
\]
and
\[
\sum_{a\in X }\mu_a=\sum_{b \in X}\nu_b.
\]
Given a cost function $c:X\times X \to \erre$, let $\pi$ be a trim solution of the transportation problem. We can then find two couples of measures $(\mu^{(d)},\mu^{(c)})$ and $(\nu^{(d)},\nu^{(c)})$ and a couple of functions $h^{(1)}$ and $h^{(2)}$ such that
\begin{align}
    \mu &=\mu^{(d)}+\mu^{(c)}\quad \text{and}\quad  \nu=\nu^{(d)}+\nu^{(c)},\label{difmod1}\\
    \pi &=(Id,h^{(1)})_\#\mu^{(d)}+(h^{(2)},Id)_\#\nu^{(d)}.\label{difmod2}
\end{align}
\end{theorem}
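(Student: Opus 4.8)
The plan is to read the support of $\pi$ as a bipartite graph and to exploit the minimality of $\#\,{\rm spt}(\pi)$ to force that graph to be acyclic; once acyclicity is in hand, the decomposition \eqref{difmod1}--\eqref{difmod2} follows from a rooting argument. First I would set up the combinatorial model: regard the two copies of $X$ carrying the first and the second marginal as the two sides of a bipartite graph $G$, whose vertices are the \emph{sources} and the \emph{targets}, and join $x$ to $y$ by an edge whenever $\pi_{x,y}>0$. Thus the edges of $G$ are exactly the points of ${\rm spt}(\pi)$, and for an edge $e=(x,y)$ I write $\pi_e:=\pi_{x,y}$ and $c_e:=c_{x,y}$. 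The whole theorem rests on one structural claim: $G$ is a \emph{forest}.

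The main obstacle is this acyclicity statement, and I would prove it by a cycle-cancellation argument. Suppose $G$ contained a cycle; being bipartite, it has even length, and its edges split alternately around the cycle into two nonempty classes $E^{+}$ and $E^{-}$, so that every vertex of the cycle meets exactly one edge of each class. Setting $\theta=\sum_{e\in E^{+}}\delta_{e}-\sum_{e\in E^{-}}\delta_{e}$, the telescoping at each vertex shows that $\theta$ has vanishing marginals, whence $\pi+t\theta\in\Pi(\mu,\nu)$ for every $t$ with $|t|$ small enough to keep the (strictly positive) cycle entries nonnegative. The cost is affine in $t$, namely $\mathbb{T}_c(\pi+t\theta)=\mathbb{T}_c(\pi)+t\big(\sum_{e\in E^{+}}c_e-\sum_{e\in E^{-}}c_e\big)$; since $\pi$ is optimal and $t$ may be taken of either sign, the slope must vanish and the whole segment is optimal. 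I would then push $t$ up to the first value $t^{*}=\min_{e\in E^{-}}\pi_e>0$ at which some edge of $E^{-}$ reaches $0$: the plan $\pi+t^{*}\theta$ is still optimal, its support is contained in ${\rm spt}(\pi)$ and misses that edge, so $\#\,{\rm spt}(\pi+t^{*}\theta)<\#\,{\rm spt}(\pi)$, contradicting trimness. (Lemma \ref{lm:trimmingered} fits this picture, as it also permits arguing one connected component at a time.)

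With $G$ a forest, I would build the two maps by rooting. Root each connected component arbitrarily and assign to every non-root vertex its parent edge; then each edge is assigned to exactly one of its endpoints, and every vertex receives at most one assigned edge (the root receiving none). If a source $x$ is assigned an edge, to $y$ say, I set $h^{(1)}(x)=y$ and $\mu^{(d)}_x=\pi_{x,y}$; if a target $y$ is assigned an edge, from $x$ say, I set $h^{(2)}(y)=x$ and $\nu^{(d)}_y=\pi_{x,y}$; the remaining masses go into $\mu^{(c)}:=\mu-\mu^{(d)}$ and $\nu^{(c)}:=\nu-\nu^{(d)}$. Because each source carries at most one assigned outgoing edge, $h^{(1)}$ is a genuine function on ${\rm spt}(\mu^{(d)})$, and symmetrically $h^{(2)}$ is a function on ${\rm spt}(\nu^{(d)})$.

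The final step is a routine edge-by-edge check. Each edge is assigned to exactly one endpoint, so it is covered by exactly one of the two terms: an edge assigned to its source contributes $\mu^{(d)}_x=\pi_{x,y}$ through $(Id,h^{(1)})_{\#}\mu^{(d)}$ and nothing through the other term, while an edge assigned to its target contributes $\nu^{(d)}_y=\pi_{x,y}$ through $(h^{(2)},Id)_{\#}\nu^{(d)}$ and nothing through the first; summing over all edges gives \eqref{difmod2}. Summing the masses over the edges incident to a fixed source (resp. target) recovers $\mu_x$ (resp. $\nu_y$), which simultaneously yields $\mu=\mu^{(d)}+\mu^{(c)}$, $\nu=\nu^{(d)}+\nu^{(c)}$ and the nonnegativity $\mu^{(c)},\nu^{(c)}\ge 0$, establishing \eqref{difmod1}. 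I expect no serious difficulty beyond the acyclicity step, which is where the hypothesis that $\pi$ is trim does all the work.
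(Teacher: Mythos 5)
Your proof is correct, but it follows a genuinely different route from the paper's. The paper argues by induction on $\#X$: it imports from the linear-programming literature the bound $\#\,{\rm spt}(\pi)\leq 2n-1$ for trim solutions, uses a pigeonhole argument to find a source $\bar a$ of degree one and a target $\underline b$ of degree one in the support graph, peels off the corresponding edge(s) according to a three-case analysis ($\mu_{\bar a}=\nu_{\bar b}$, or $\nu_{\underline b}=\mu_{\underline a}$, or both inequalities strict), and extends the decomposition supplied by the inductive hypothesis, with Lemma \ref{lm:trimmingered} guaranteeing that the restricted plan is again trim. You instead prove directly that trimness forces the support, viewed as a bipartite graph, to be acyclic --- via the classical cycle-cancellation argument --- and then produce the two maps in one stroke by rooting each tree of the resulting forest and assigning every edge to its child endpoint. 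Both proofs exploit the same underlying structural fact (trim plans have forest-shaped supports), but yours is more self-contained: your cycle-cancellation step is essentially the proof of the $2n-1$ bound that the paper cites without proof, so your argument makes transparent exactly where optimality and minimality of the support are used, and it avoids induction entirely. What the paper's inductive peeling buys in exchange is reusability: the same peeling procedure is rerun, almost verbatim, in the proof of Corollary \ref{crl:alphaunif} to track lower bounds on the masses $\mu^{(d)}_x$ and $\nu^{(d)}_y$ along the construction, bookkeeping that your global rooting construction would have to redo in a different form.
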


We say that the decomposition ensured by Theorem \ref{th:linf_ext} is a \emph{diffusive model} associated with the given (trim) solution $\pi$. We call $\mu^{(d)}$ and $\nu^{(d)}$ the \emph{diffusive part} of $\mu$ and $\nu$, respectively. Similarly, we denote with $\mu^{(c)}$ and $\nu^{(c)}$ the \emph{concentrating part} of $\mu$ and $\nu$, respectively.
Finally, we call $h^{(1)}$ the \emph{diffusive scheme} of $\mu$ and $h^{(2)}$ the \emph{diffusive scheme} of $\nu$.

\begin{proof}
We proceed by induction on the cardinality of $X$. If $\#X=1$, the thesis follows trivially. 

Let us now assume that the statement holds for each couple of measures whose support has cardinality $(n-1)$ and let $\mu$ and $\nu$ be two measures supported on a set with cardinality $n$, namely $X_n$. Given a trim solution $\pi$, it is well known (Chapter 7, \cite{10.5555/248375}) that 
\[
\# {\rm spt} (\pi)\leq 2n-1.
\]
Since $\mu$ and $\nu$ have $n$ points in their support, we can find $\bar{a}\in X$ such that there exists a unique $\bar{b}\in {\rm spt}(\nu)$ for which
\[
\pi_{\bar{a},\bar{b}}>0,
\]
hence $\mu_{\bar{a}}=\pi_{\bar{a},\bar{b}}\leq \nu_{\bar{b}}$. Similarly, we can find $\underline{b}\in X$ such that there exists a unique $\underline{a}\in {\rm spt}(\mu)$ for which 
\[
\pi_{\underline{a},\underline{b}}>0,
\]
so that $\nu_{\underline{b}}=\pi_{\underline{a},\underline{b}}\leq \mu_{\underline{a}}$.

If $\mu_{\bar{a}}=\pi_{\bar{a},\bar{b}} = \nu_{\bar{b}}$, we can restrict the plan $\pi$ to the set ${\rm spt}(\pi)\backslash\{(\bar{a},\bar{b})\}$. We denote this restriction with $\pi_*$. By definition, the marginals of $\pi_*$ are
\[
\mu_*=\mu-\mu_{\bar{a}}\delta_{\bar{a}}
\]
and
\[
\nu_*=\nu-\nu_{\bar{b}}\delta_{\bar{b}}.
\]
In particular, the supports of $\mu_*$ and $\nu_*$ contain $(n-1)$ points each. By induction we can find $(\mu^{(d)}_*,\mu^{(c)}_*)$, $(\nu^{(d)}_*,\nu^{(c)}_*)$, and $(h^{(1)}_*,h^{(2)}_*)$ such that
\[
\mu_*=\mu^{(d)}_*+\mu^{(c)}_*,
\]
\[
\nu_*=\nu^{(d)}_*+\nu^{(c)}_*,
\]
and
\[
\pi_*=(Id,h^{(1)}_*)_\#\mu^{(d)}_* +(h^{(2)}_*,Id)_\#\nu^{(d)}_*.
\]
We can then define
\[
\mu^{(d)}=\mu^{(d)}_*+\mu_{\bar{a}}\delta_{\bar{a}}, \quad\quad\quad \mu^{(c)}=\mu_*^{(c)},
\]
\[
\nu^{(d)}=\nu^{(d)}_*, \quad\quad\quad \nu^{(c)}=\nu_*^{(c)}+\nu_{\bar{b}}\delta_{\bar{b}},
\]
and
\[
h^{(1)}(a)=
\begin{cases}
h^{(1)}_*(a)\quad\quad\quad \text{if }a\neq \bar{a},\\
\bar{b} \quad\quad\quad\quad\quad\; \text{otherwise,}
\end{cases},\quad\quad\quad\quad\quad h^{(2)}(b)=h^{(2)}_*(b).
\]
It easy to see that
\[
\mu=\mu^{(d)}+\mu^{(c)}, \quad \quad \quad \nu=\nu^{(d)}+\nu^{(c)}
\]
and, since $h^{(1)}_\#\delta_{\bar{a}}=\delta_{\bar{b}}$, we have
\begin{equation}
    \label{eq:plansepdiffusive}
    \pi=(Id,h^{(1)})_\#\mu^{(d)}+(h^{(2)},Id)_\#\nu^{(d)},
\end{equation}
which concludes the proof in the case $\mu_{\bar{a}}=\pi_{\bar{a},\bar{b}}=\nu_{\bar{b}}$. We proceed similarly if $\nu_{\underline{b}}=\pi_{\underline{a},\underline{b}}= \mu_{\underline{a}}$.

To conclude, consider the case in which $\mu_{\bar{a}}=\pi_{\bar{a},\bar{b}} < \nu_{\bar{b}}$ and $\nu_{\underline{b}}=\pi_{\underline{a},\underline{b}}< \mu_{\underline{a}}$.
In this case, we restrict $\pi$ to the set ${\rm spt}(\pi)\backslash \{(\bar{a},\bar{b}),(\underline{a},\underline{b})\}$. Let us denote again with $\pi_*$ the restriction and with $\mu_*$ and $\nu_{*}$ its marginals. Since both $\mu_*$ and $\nu_*$ have $(n-1)$ points in their supports, we can again decompose them as
\[
\mu_*=\mu^{(d)}_*+\mu^{(c)}_*, \quad \quad \quad \nu_*=\nu^{(d)}_*+\nu^{(c)}_*
\]
and find a couple of functions $h^{(1)}_*,h^{(2)}_*$ for which
\[
\pi_*=(Id,h^{(1)}_*)_\#\mu^{(d)}_*+(h^{(2)}_*,Id)_\#\nu^{(d)}_*.
\]
We can then define
\[
\mu^{(d)}=\mu^{(d)}_*+\mu_{\bar{a}}\delta_{\bar{a}}, \quad\quad\quad \mu^{(c)}=\mu_*^{(c)}+\mu_{\underline{a}}\delta_{\underline{a}},
\]
\[
\nu^{(d)}=\nu^{(d)}_*{(c)}+\nu_{\underline{b}}\delta_{\underline{b}}, \quad\quad\quad \nu^{(c)}=\nu_*^{(c)}+\nu_{\bar{b}}\delta_{\bar{b}},
\]
and
\[
h^{(1)}(a)=
\begin{cases}
h^{(1)}_*(a)\quad\quad \text{if }a\neq \bar{a},\\
\bar{b} \quad\quad\quad\quad\; \text{otherwise.}
\end{cases}\quad\quad h^{(2)}(b)=\begin{cases}
h^{(2)}_*(b)\quad\quad \text{if }b\neq \underline{b},\\
\underline{a} \quad\quad\quad\quad\; \text{otherwise,}
\end{cases}
\]
which concludes the thesis.
\end{proof}

\begin{remark}
\label{rmk:impossibledecompositions}
Given two measures as in the hypothesis of Theorem \ref{th:linf_ext}, let $\mu^{(d)}$ and $\nu^{(d)}$ be their diffusive part. Since ${\rm spt}(\mu^{(d)})\subset {\rm spt}(\mu)$ and ${\rm spt}(\nu^{(d)})\subset {\rm spt}(\nu)$, the support of the transportation plan defined by formula \eqref{eq:plansepdiffusive} has, at most, $2n$ points. Thus the trim condition on the optimal transportation plan is necessary, as we are going to show in the next example.
\end{remark}

\begin{example}
\label{ex:nottrimdecomp}
Let us take
\[
\mu=\dfrac{1}{4}\bigg(\delta_{(0,0,0)}+\delta_{(1,1,0)}+\delta_{(1,0,1)}+\delta_{(0,1,1)}\bigg)
\]
and
\[
\nu=\dfrac{1}{4}\bigg(\delta_{(1,1,1)}+\delta_{(0,0,1)}+\delta_{(0,1,0)}+\delta_{(1,0,0)}\bigg),
\]
and, as a cost function, we choose the Euclidean distance in $\erre^3$, i.e.
\[
|\x-\y|:=\sqrt{\sum_{i=1}^3(x_i-y_i)^2}.
\]
It is easy to see that the plan
\begin{eqnarray*}
\pi&:=&\dfrac{1}{12}\delta_{(0,0,0)}\otimes\bigg(\delta_{(1,0,0)}+\delta_{(0,1,0)}+\delta_{(0,0,1)}\bigg)\\
&\quad&+\dfrac{1}{12}\delta_{(1,1,0)}\otimes\bigg(\delta_{(0,1,0)}+\delta_{(1,0,0)}+\delta_{(1,1,1)}\bigg)\\
&\quad&+\dfrac{1}{12}\delta_{(1,0,1)}\otimes\bigg(\delta_{(1,0,0)}+\delta_{(0,0,1)}+\delta_{(1,1,1)}\bigg)\\
&\quad&+\dfrac{1}{12}\delta_{(0,1,1)}\otimes\bigg(\delta_{(0,1,0)}+\delta_{(0,0,1)}+\delta_{(1,1,1)}\bigg)\\
\end{eqnarray*}
is optimal. However, according to Remark \ref{rmk:impossibledecompositions}, it cannot be decomposed as in formula \eqref{eq:plansepdiffusive}, since $$\# {\rm spt}(\pi)=12>2\# {\rm spt}(\mu)=8.$$
\end{example}

\begin{figure}[ht!]
\centering
{\renewcommand{\arraystretch}{1}
\setlength{\tabcolsep}{0.1em}
\begin{tabular}{ccc}

  \includegraphics[width=0.45\linewidth]{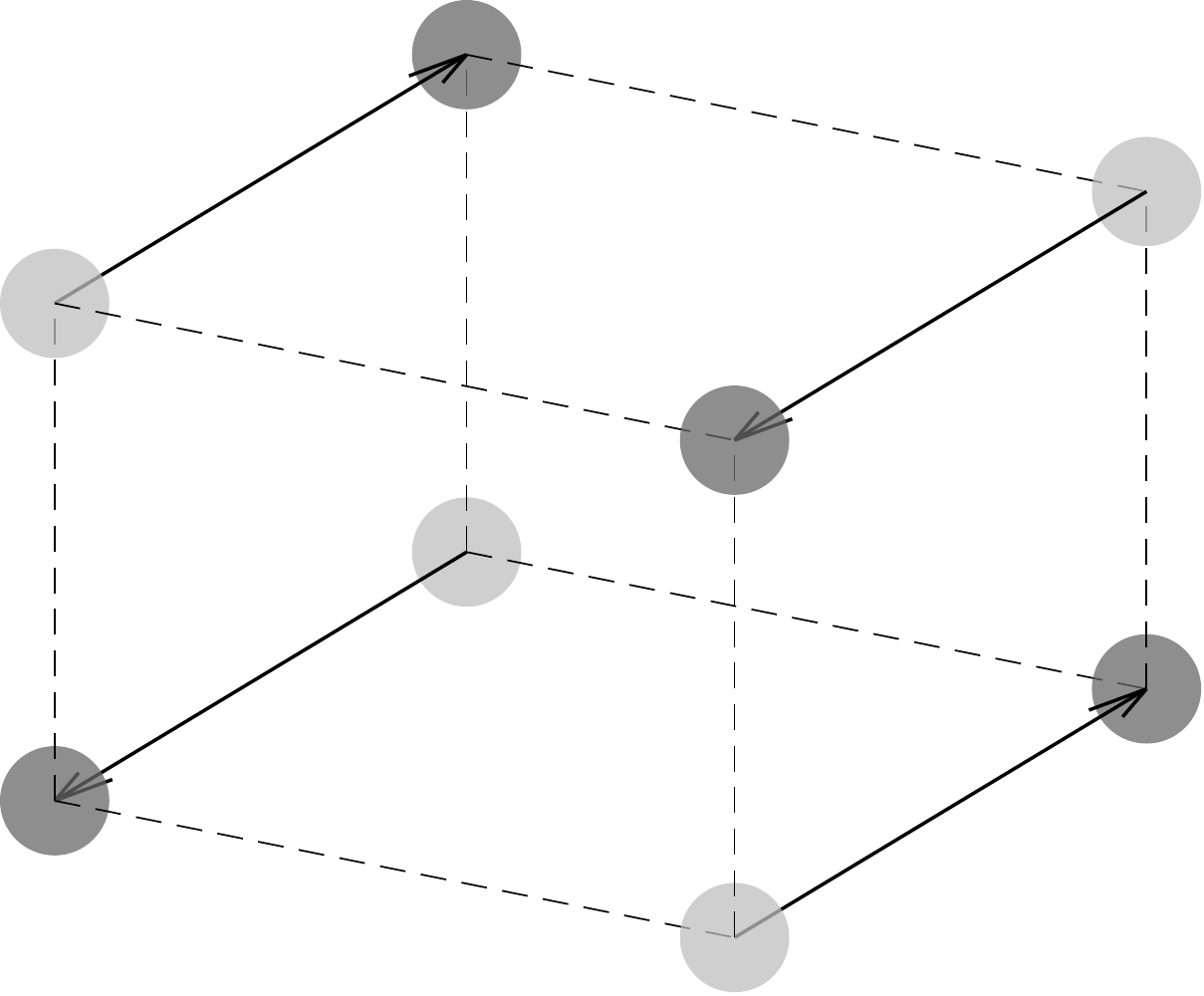} &
  
  \includegraphics[width=0.45\linewidth]{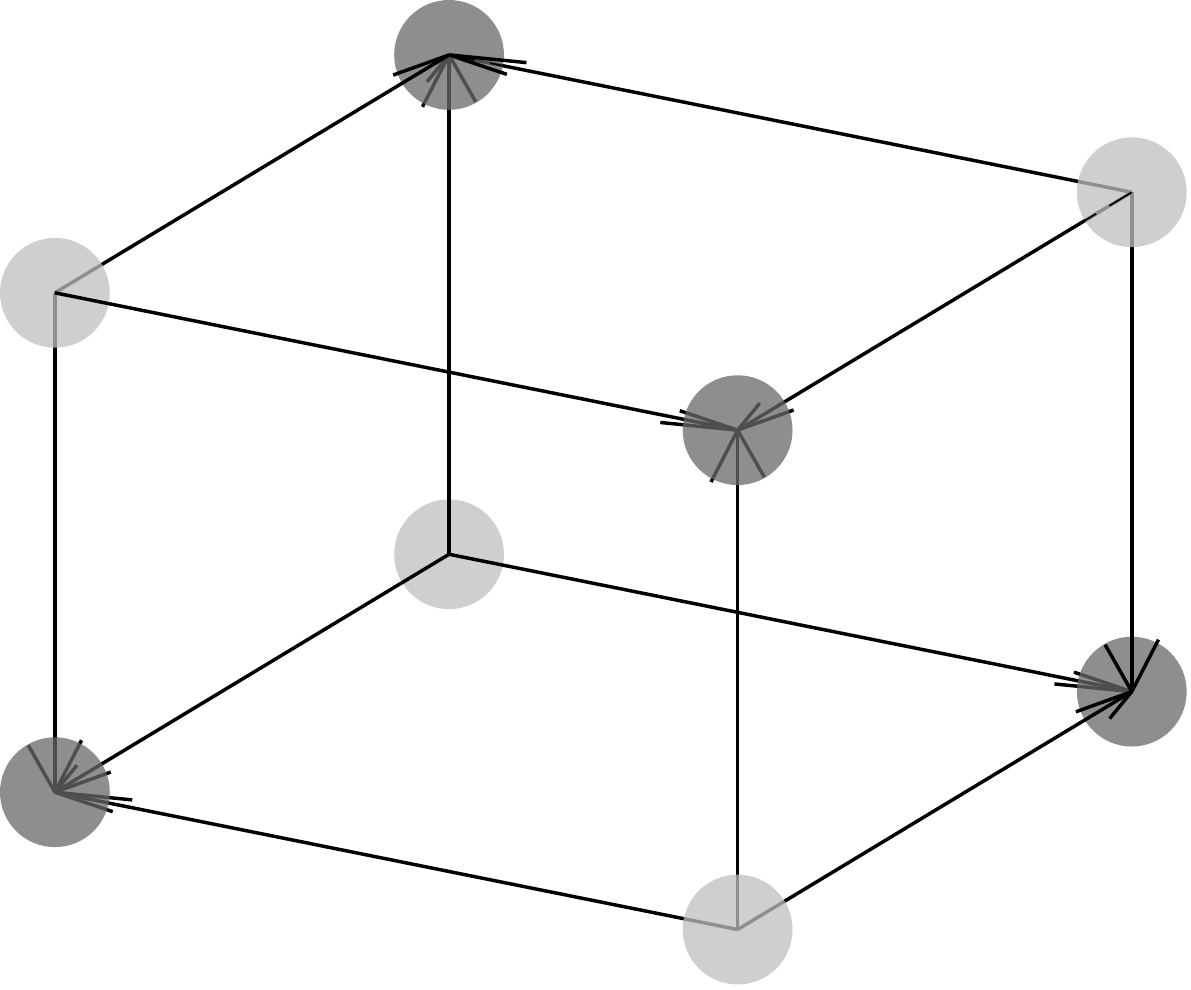} &

 \end{tabular}}
\caption{(Example \ref{ex:nottrimdecomp}) Visual comparison between the optimal plan $\pi$, which is not trim (right) and a trim plan (left). The support of $\mu$ is indicated by light gray dots, the support of $\nu$ by dark gray dots, points $i$ and $j$ are connected by a solid edge if $(i,j)$ belongs to the support of the plan. \label{fig:etrimandnottrim}}
\end{figure}

\begin{remark}
Given a trim solution, there might be more than one diffusive model associated with it. For example, let
\[
\mu=\dfrac{1}{2}\delta_{(0,0)}+\dfrac{1}{2}\delta_{(1,1)}\quad\text{and}\quad \nu=\dfrac{1}{4}\delta_{(-1,1)}+\dfrac{3}{4}\delta_{(1,0)}
\]
be two discrete measures over $\erre^2$. As a cost function, we choose the Euclidean distance
\[
c(\x,\y):=\sqrt{(x_1-y_1)^2+(x_2-y_2)^2}.
\]
Then, the probability measure
\[
\pi=\dfrac{1}{4}\delta_{(0,0)}\otimes\delta_{(-1,1)}+\dfrac{1}{4}\delta_{(0,0)}\otimes\delta_{(1,0)}+\dfrac{1}{2}\delta_{(1,1)}\otimes\delta_{(1,0)}
\]
is a trim plan between $\mu$ and $\nu$. It easy to check that
\begin{eqnarray*}
\mu^{(d)}=\dfrac{1}{4}\delta_{(0,0)}+\dfrac{1}{2}\delta_{(1,1)},\quad &\quad&\quad \mu^{(c)}=\dfrac{1}{4}\delta_{(0,0)},\\
\nu^{(c)}=\dfrac{1}{4}\delta_{(-1,1)}+\dfrac{1}{2}\delta_{(1,0)},\quad &\quad&\quad \nu^{(d)}=\dfrac{1}{4}\delta_{(1,0)},
\end{eqnarray*}
and
\[
h^{(1)}:=\begin{cases}
(-1,1) \quad if \; x=(0,0),\\
(+
1,0) \quad if \; x=(1,1),\\
(0,0) \quad\quad otherwise,
\end{cases}\quad\quad
h^{(2)}(y)=(0,0)\quad  \forall y\in \erre^2,
\]
is a decomposition of the trim plan. However, we can also decompose $\nu$ as
\[
\tilde{\nu}^{(d)}=\dfrac{1}{4}\delta_{(-1,1)},\quad\quad\quad \tilde{\nu}^{(c)}=\dfrac{3}{4}\delta_{(1,0)},
\]
define the functions as
\[
h^{(1)}(\x)=(1,0)\quad \forall \x \in \erre^2, \quad\quad\quad h^{(2)}(\y)=(0,0) \quad \forall \y \in \erre^2,
\]
and still obtain an admissible decomposition of $\pi$.
\end{remark}

\section{An Upper Bound for the Infinity Wasserstein distance in the Discrete Setting}

As an immediate consequence of the diffusive model decomposition \eqref{difmod1}-\eqref{difmod2} given in Theorem \ref{th:linf_ext}, we can decompose the Wasserstein distance associated to a cost function $c$ and use it to estimate the infinity-Wasserstein distance.

\begin{corollary}
\label{cor:winfty}
Let $\mu,\nu\in\PP(X)$ be two discrete measures, $c:X\times X \to \erre$ be a cost function, and $\pi$ be a trim solution of the transportation problem. Given a diffusive model for $\pi$, we have
\[
W_c(\mu,\nu)=\sum_{x\in X}c(x,h^{(1)}(x))\mu^{(d)}_x+\sum_{y\in X}c(h^{(2)}(y),y)\nu^{(d)}_y
\]
and
\[
\mathbb{T}_c^{(\infty)}(\pi)=\max \bigg\{||c(x,h^{(1)}(x))||_{L_{\mu^{(d)}}^{\infty}},||c(h^{(2)}(y),y)||_{L^{\infty}_{\nu^{(d)}}}\bigg\}.
\]
In particular, we have
\begin{equation}
\label{eq:linfextim}
  W_c(\mu,\nu)\geq \alpha W^{(\infty)}_c(\mu,\nu),  
\end{equation}
where 
\begin{equation}
\label{eq:alphamin}
    \alpha=\min_{a\in {\rm spt}(\mu^{(d)}),b\in {\rm spt}(\nu^{(d)})}\{\nu^{(d)}_b,\mu^{(d)}_a\}.
\end{equation}

\end{corollary}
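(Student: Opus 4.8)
The plan is to exploit the diffusive model decomposition from Theorem \ref{th:linf_ext} directly, since formula \eqref{difmod2} writes the trim plan $\pi$ as a sum of two deterministic plans. First I would compute $W_c(\mu,\nu)$ by integrating the cost $c$ against $\pi$. Because $\pi=(Id,h^{(1)})_\#\mu^{(d)}+(h^{(2)},Id)_\#\nu^{(d)}$, the push-forward formula gives
\[
W_c(\mu,\nu)=\int_{X\times X}c\,{\rm d}\pi=\int_X c(x,h^{(1)}(x))\,{\rm d}\mu^{(d)}(x)+\int_X c(h^{(2)}(y),y)\,{\rm d}\nu^{(d)}(y),
\]
which, upon writing the discrete measures coefficientwise, yields the first displayed identity. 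The key point here is that $\pi$ being optimal means $W_c(\mu,\nu)=\mathbb{T}_c(\pi)$, so no separate optimality argument is needed beyond invoking that $\pi$ is a trim (hence optimal) solution.

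Next I would treat the $\mathbb{T}_c^{(\infty)}(\pi)$ identity. By Definition \ref{def_winfty}, $\mathbb{T}_c^{(\infty)}(\pi)=\|c\|_{L^{\infty}_\pi}$, the $\pi$-essential supremum of $c$ over ${\rm spt}(\pi)$. Since the support of $\pi$ is the disjoint union of the supports of the two deterministic pieces, the essential supremum splits as the maximum over the two families of active cost values: those of the form $c(x,h^{(1)}(x))$ for $x\in{\rm spt}(\mu^{(d)})$ and those of the form $c(h^{(2)}(y),y)$ for $y\in{\rm spt}(\nu^{(d)})$. This gives the claimed $\max$ of the two $L^{\infty}$ norms. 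I would be slightly careful that the decomposition in Theorem \ref{th:linf_ext} is additive rather than a genuine disjoint partition, but since masses are positive and the supports of the two deterministic plans are contained in disjoint coordinate slabs (one tied to $\mu^{(d)}$, one to $\nu^{(d)}$), the support of $\pi$ is exactly the union of the two, so the essential supremum is genuinely the maximum of the two suprema.

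Finally, for the inequality \eqref{eq:linfextim} I would bound each sum in the first identity from below by pulling out the minimal weight $\alpha$ defined in \eqref{eq:alphamin}. Concretely, each term $c(x,h^{(1)}(x))\mu^{(d)}_x$ is at least $\alpha\,c(x,h^{(1)}(x))$, and similarly for the $\nu^{(d)}$ sum, so
\[
W_c(\mu,\nu)\geq \alpha\Big(\sum_{x}c(x,h^{(1)}(x))+\sum_{y}c(h^{(2)}(y),y)\Big)\geq \alpha\,\mathbb{T}_c^{(\infty)}(\pi)\geq \alpha\,W^{(\infty)}_c(\mu,\nu),
\]
where the middle step uses that a sum of nonnegative terms dominates the maximum of any of them, hence dominates the maximum of the two suprema, i.e. $\mathbb{T}_c^{(\infty)}(\pi)$; the last step is immediate since $W^{(\infty)}_c$ is the infimum of $\mathbb{T}_c^{(\infty)}$ over all plans and $\pi\in\Pi(\mu,\nu)$.

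The main obstacle I expect is the second identity rather than the inequality: one must justify that passing to the $\pi$-essential supremum really does decouple across the two deterministic summands, which hinges on the supports being disjoint and each point of ${\rm spt}(\pi)$ carrying strictly positive mass. Once that structural fact is pinned down, the rest is a routine rearrangement, and the bound $\sum \geq \max$ together with the definition of $W^{(\infty)}_c$ closes the argument.
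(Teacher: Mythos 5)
Your proposal is correct and follows exactly the route the paper intends: the paper gives no separate proof of Corollary \ref{cor:winfty}, presenting it as an immediate consequence of the decomposition \eqref{difmod1}--\eqref{difmod2}, and your argument (integrate $c$ against the decomposed plan, split the $\pi$-essential supremum over the two deterministic pieces, then pull out $\alpha$ and use $\mathbb{T}_c^{(\infty)}(\pi)\geq W^{(\infty)}_c(\mu,\nu)$) is precisely that unpacking. One small remark: your worry about ``disjoint coordinate slabs'' is unnecessary and the disjointness claim is not actually guaranteed by Theorem \ref{th:linf_ext}; what you really need, and what holds trivially, is that the support of a sum of nonnegative measures is the union of the supports, so the essential supremum splits into the maximum of the two $L^\infty$ norms regardless of whether the two deterministic plans overlap.
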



The value $\alpha$ defined in \eqref{eq:alphamin} depends on the particular diffusive model we choose. However, since $W_c(\mu,\nu)$ and $W^{(\infty)}_c$ do not depend on the choice of the diffusive model, if we can give a lower bound on $\alpha$ for a particular diffusive model, we can generalize the estimate \eqref{eq:linfextim}.

\begin{corollary}
\label{crl:alphaunif}
Let $\mu,\nu\in \PP(X)$ be two discrete measures and $c:X    \times X\to \erre_+$ be a cost function. For any trim plan $\pi$, there exists a diffusive model for which
\begin{equation}
    \label{eq:boundonmudi}
\alpha \geq \min_{(A,B)\in K(\mu,\nu)}\bigg\{\bigg|\sum_{x\in A}\mu_x-\sum_{y\in B}\nu_y\bigg|\bigg\},
\end{equation}
where $\alpha$ is defined in relation \eqref{eq:alphamin} and
\[
K(\mu,\nu):=\bigg\{(A,B)\subset X\times X\quad \text{s.t.}\quad \bigg|\sum_{x\in A}\mu_x-\sum_{y\in B}\nu_y\bigg|>0\bigg\}.
\]
\end{corollary}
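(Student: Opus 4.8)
The plan is to exhibit a diffusive model in which every mass $\mu^{(d)}_a$ and every mass $\nu^{(d)}_b$ appearing in the diffusive parts is one of the flow values $\pi_{x,y}$ of the trim plan, and then to rewrite each such flow value as an admissible difference $|\sum_{x\in A}\mu_x-\sum_{y\in B}\nu_y|$. Since these differences equal positive flows, the corresponding pairs $(A,B)$ lie in $K(\mu,\nu)$, and the bound \eqref{eq:boundonmudi} then follows immediately by passing to minima. The first ingredient I would record is the structural fact that the support of a trim plan is acyclic: viewing ${\rm spt}(\pi)$ as the edge set of a bipartite graph $G$ on ${\rm spt}(\mu)\sqcup{\rm spt}(\nu)$, the graph $G$ is a forest. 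Indeed, a trim plan is an optimal plan of minimal support, hence an extreme point of $\Pi(\mu,\nu)$, and extreme points of the transportation polytope are acyclic; this is precisely what underlies the bound $\#{\rm spt}(\pi)\le 2n-1$ already used in the proof of Theorem \ref{th:linf_ext}.

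Next, I would revisit the inductive construction in the proof of Theorem \ref{th:linf_ext} and track the masses placed into $\mu^{(d)}$ and $\nu^{(d)}$. At each step, a mass is added to $\mu^{(d)}$ only at a point $\bar a$ that is a leaf of the current forest, and the added mass equals the current mass of that leaf; since the restriction only deletes edges and never alters the flow carried by a surviving edge, this current mass coincides with the original flow $\pi_{\bar a,\bar b}=\pi_{\bar a,h^{(1)}(\bar a)}$ on the unique edge incident to $\bar a$, and symmetrically for $\nu^{(d)}$. Thus, for the diffusive model produced by that construction, $\mu^{(d)}_a=\pi_{a,h^{(1)}(a)}$ for every $a\in{\rm spt}(\mu^{(d)})$ and $\nu^{(d)}_b=\pi_{h^{(2)}(b),b}$ for every $b\in{\rm spt}(\nu^{(d)})$; in particular, the quantity $\alpha$ in \eqref{eq:alphamin} is the minimum of a collection of flow values of $\pi$.

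Finally, I would establish the cut formula for flows on a forest. Fix an edge $e=(x_0,y_0)\in{\rm spt}(\pi)$; deleting $e$ splits the tree containing it into two components, and I let $A$ and $B$ be, respectively, the $\mu$-vertices and the $\nu$-vertices of the component containing $x_0$. Because $e$ is the only edge crossing the cut, every $\nu$-vertex of that component receives its mass entirely from within the component, so mass conservation yields $\sum_{x\in A}\mu_x-\sum_{y\in B}\nu_y=\pi_{x_0,y_0}>0$. Consequently each diffusive mass equals $|\sum_{x\in A}\mu_x-\sum_{y\in B}\nu_y|$ for some pair $(A,B)\in K(\mu,\nu)$, hence is at least $\min_{(A,B)\in K(\mu,\nu)}|\sum_{x\in A}\mu_x-\sum_{y\in B}\nu_y|$, and therefore so is their minimum $\alpha$, which is exactly \eqref{eq:boundonmudi}.

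The step I expect to be the main obstacle is the second one: verifying carefully, across the several cases of the induction (in particular for the reduced endpoints $\bar b$ and $\underline a$), that the value assigned to a diffusive point is genuinely an original flow $\pi_{x,y}$ rather than merely a partial mass that has already been diminished in an earlier peeling step. The cut formula of the third step is routine once acyclicity is available, but it relies essentially on the forest structure of ${\rm spt}(\pi)$, which is precisely why the trim hypothesis cannot be removed.
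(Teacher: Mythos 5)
Your proof is correct, but it follows a genuinely different route from the paper's. The paper uses the same peeling construction, but its key step is algebraic: it unwinds the recursion relating the reduced measures $\mu^{(i)},\nu^{(i)}$ so as to write each diffusive mass as a difference $\sum_{a\in\tilde A}\mu_a-\sum_{b\in\tilde B}\nu_b$ of sums of \emph{original} masses, and then takes the minimum over $K(\mu,\nu)$. You instead prove two structural facts: (i) in the diffusive model produced by the construction, every diffusive mass is an original flow value $\pi_{x,y}$ --- which is correct, since diffusive mass is only ever assigned at a leaf of the current restricted plan, and the marginal of a restriction at a leaf equals the flow on its unique incident edge, a value the restriction never altered; and (ii) on a forest, each flow value equals the cut difference obtained by deleting that edge, by mass conservation. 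This is cleaner and in fact slightly stronger than what the paper establishes: it identifies $\alpha$ as a minimum of flow values of $\pi$, and it shows that the only pairs $(A,B)$ really needed in \eqref{eq:boundonmudi} are those arising as the two sides of ${\rm spt}(\pi)$ minus an edge, a much smaller family than all of $K(\mu,\nu)$, so your argument would even yield a sharper constant; it also makes transparent exactly where trimness enters (the forest structure), which the paper leaves implicit behind the bound $\#{\rm spt}(\pi)\le 2n-1$. What the paper's route buys in exchange is self-containedness: it never needs acyclicity of trim plans, only the support bound and the recursion. On that one point your write-up is loose: the claim that a trim plan is an extreme point of $\Pi(\mu,\nu)$ with acyclic support is asserted rather than proved, and it deserves the one-line argument --- if ${\rm spt}(\pi)$ contained a cycle, perturbing $\pi$ by the alternating signed measure supported on the cycle preserves both marginals, optimality forces the alternating sum of costs along the cycle to vanish, and pushing the perturbation until some edge mass hits zero produces an optimal plan with strictly smaller support, contradicting trimness. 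With that line added, your proof is complete.
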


\begin{proof}
Let $n$ be the cardinality of $X$. Since $\pi$ is trim between $\mu$ and $\nu$, we have $\#{\rm spt}(\pi)\leq 2n-1$, hence we can find $\bar{x}_1$ such that 
\[
\exists ! \; \;\bar{y}_1 \quad s.t. \quad \pi_{\bar{x}_1,\bar{y}_1}\neq 0
\]
and $\underline{y}_1$ such that
\[
\exists !\;  \;  \underline{x}_1 \quad s.t. \quad \pi_{\underline{x}_1,\underline{y}_1}\neq 0.
\]
If $\underline{x}_1=\bar{x}_1$ (and hence $\underline{y}_1=\bar{y}_1$), we have $\mu_{\bar{x}_1}=\nu_{\bar{y}_1}$ and we define
\[
\mu^{(d)}_{\bar{x}_1}=\mu_{\bar{x}_1}, \quad \quad \quad \nu^{(c)}_{\bar{y}_1}=\mu_{\bar{x}_1},
\]
and
\[
\mu^{(1)}:=\mu-\mu_{\bar{x}_1}\delta_{\bar{x}_1},\quad  \nu^{(1)}:=\nu-\nu_{\bar{y}_1}\delta_{\bar{y}_1}, \quad \pi^{(1)}=\pi-\pi_{\bar{x}_1,\bar{y}_1}\delta_{\bar{x}_1,\bar{y}_1}.
\]
Otherwise, if $\underline{x}_1\neq\bar{x}_1$ (and hence $\underline{y}_1\neq\bar{y}_1$), we set
\begin{eqnarray*}
\mu^{(d)}_{\bar{x}_1}=\mu_{\bar{x}_1},\quad &\quad& \quad \mu^{(c)}_{\underline{x}_1}=\nu_{\underline{y}_1},\\
\nu^{(d)}_{\underline{y}_1}=\nu_{\underline{y}_1},\quad &\quad& \quad \nu^{(c)}_{\bar{y}_1}=\mu_{\bar{x}_1},
\end{eqnarray*}
and
\begin{eqnarray*}
\mu^{(1)}&=&\mu-\mu_{\bar{x}_1}\delta_{\bar{x}_1}-\nu_{\underline{y}_1}\delta_{\underline{x}_1},\\
\nu^{(1)}&=&\nu-\nu_{\underline{y}_1}\delta_{\underline{y}_1}-\mu_{\bar{x}_1}\delta_{\bar{y}_1}\\
\pi^{(1)}&=&\pi-\pi_{\bar{x}_1,\bar{y}_1}\delta_{\bar{x}_1,\bar{y}_1}-\pi_{\underline{x}_1,\underline{y}_1}\delta_{\underline{x}_1,\underline{y}_1}.
\end{eqnarray*}
    In both cases, we find two measures, $\mu^{(1)}$ and $\nu^{(1)}$, whose support has, at most, $n-1$ points. Since $\pi^{(1)}$ is a restriction of a trim plan, by Lemma \ref{lm:trimmingered}, also $\pi^{(1)}$ is trim between its marginals $\mu^{(1)}$ and $\nu^{(1)}$. Therefore, we can repeat the process, finding two points $\bar{x}_2$ and $\underline{y}_2$ for which
\[
\exists ! \;\; \bar{y}_2 \quad s.t. \quad \pi_{\bar{x}_2,\bar{y}_2}\neq 0
\]
and
\[
\exists ! \;\; \underline{x}_2\quad s.t. \quad \pi_{\underline{x}_2,\underline{y}_2}\neq 0.
\]
We can then extend the definition of the measures $\mu^{(d)},\mu^{(c)},\nu^{(d)}$, and $\nu^{(c)}$, define the measures $\mu^{(2)}$, $\nu^{(2)}$,  and $\pi^{(2)}$ and start all over again.

At each step, we define two measures $\mu^{(i)}$ and $\nu^{(i)}$ and increase the cardinality of the supports of $\mu^{(d)},\mu^{(c)},\nu^{(d)}$, and $\nu^{(c)}$. Given any $x \in {\rm spt}(\mu^{(d)})$, we can then find $i\in \{0,1,\dots,n-1\}$ such that
\begin{equation}
    \label{eq:mu_mu_k}
    \mu^{(d)}_x=\mu^{(i)}_x,
\end{equation}
and, similarly, for any $y\in {\rm spt}(\nu^{(d)})$, we can find a $j\in \{0,1,\dots,n-1\}$ such that
\[
\nu^{(d)}_y=\nu^{(j)}_y,
\]
with the convention $\mu^{(0)}=\mu$ and $\nu^{(0)}=\nu$.
The relation between $\mu^{(i)}$ and $\mu^{(i+1)}$ is either
\[
\mu^{(i+1)}=\mu^{(i)}-\mu^{(i)}_{\bar{x}_{i+1}}\delta_{\bar{x}_{i+1}}
\]
or
\[
\mu^{(i+1)}=\mu^{(i)}-\mu^{(i)}_{\bar{x}_{i+1}}\delta_{\bar{x}_{i+1}}-\nu^{(i)}_{\underline{y}_{i+1}}\delta_{\underline{x}_{i+1}}.
\]
Similarly, we have
\[
\nu^{(i+1)}=\nu^{(i)}-\nu^{(i)}_{\underline{y}_{i+1}}\delta_{\underline{y}_{i+1}}
\]
or
\[
\nu^{(i+1)}=\nu^{(i)}-\nu^{(i)}_{\underline{y}_{i+1}}\delta_{\underline{y}_{i+1}}-\mu^{(i)}_{\bar{x}_{i+1}}\delta_{\bar{y}_{i+1}}.
\]
Similarly, we can write $\mu^{(i)}$ and $\nu^{(i)}$ as a function of $\mu^{(i-1)}$ and $\nu^{(i-1)}$, and then express $\mu^{(i+1)}$ through $\mu^{(i-1)}$ and $\nu^{(i-1)}$ as
\begin{equation}
\label{eq.muipiuuno}
   \mu^{(i+1)}_x=\sum_{a\in\tilde{A}_2}\mu^{(i-1)}_a-\sum_{b\in\tilde{B}_2}\nu^{(i-1)}_b, 
\end{equation}
where $\tilde{A}_2$ and $\tilde{B}_2$ are two subsets of $X$ whose cardinality is at most two. By iterating this process, we are able to find
\begin{equation}
    \label{eq:muipiuuno2}
    \mu^{(i+1)}_x=\sum_{a\in\tilde{A}_{n-(i+1)}}\mu_a-\sum_{b\in\tilde{B}_{n-(i+1)}}\nu_b,
\end{equation}
where $\tilde{A}_{n-(i+1)}$ and $\tilde{B}_{n-(i+1)}$ are subsets of $X$,  whose cardinality is $n-(i+1)$. Since the left side of \eqref{eq.muipiuuno} is positive, we can rewrite \eqref{eq:muipiuuno2} as
\begin{equation}
\label{eq:mu_definitiva}
    \mu^{(i+1)}_x=\bigg|\sum_{a\in\tilde{A}_2}\mu^{(i-1)}_a-\sum_{b\in\tilde{B}_2}\nu^{(i-1)}_b\bigg|.
\end{equation}
By taking the minimum over $K(\mu,\nu)$ of the right side in \eqref{eq:mu_definitiva}, we find
\[
\mu^{(i)}_x\geq \min_{(A,B)\in K(\mu,\nu)}\bigg\{\bigg|\sum_{x\in A}\mu_x-\sum_{y\in B}\nu_y\bigg|\bigg\},
\]
for any $i \in \{0,1,\dots,n-1\}$ and each $x\in {\rm spt}(\mu^{(i)})$, therefore, from relation \eqref{eq:mu_mu_k}, we get
\[
\mu^{(d)}\geq \min_{(A,B)\in K(\mu,\nu)}\bigg\{\bigg|\sum_{x\in A}\mu_x-\sum_{y\in B}\nu_y\bigg|\bigg\}.
\]
Similarly, one can prove
\[
\nu_y^{(d)}\geq \min_{(A,B)\in K(\mu,\nu)}\bigg\{\bigg|\sum_{x\in A}\mu_x-\sum_{y\in B}\nu_y\bigg|\bigg\},
\]
for each $y \in {\rm spt}(\nu^{(d)})$, hence relation \eqref{eq:boundonmudi} is proven.

\end{proof}

\begin{figure}[!t]
\centering
{\renewcommand{\arraystretch}{1}
\setlength{\tabcolsep}{0.1em}
\begin{tabular}{ccc}

    \includegraphics[height=0.24\linewidth]{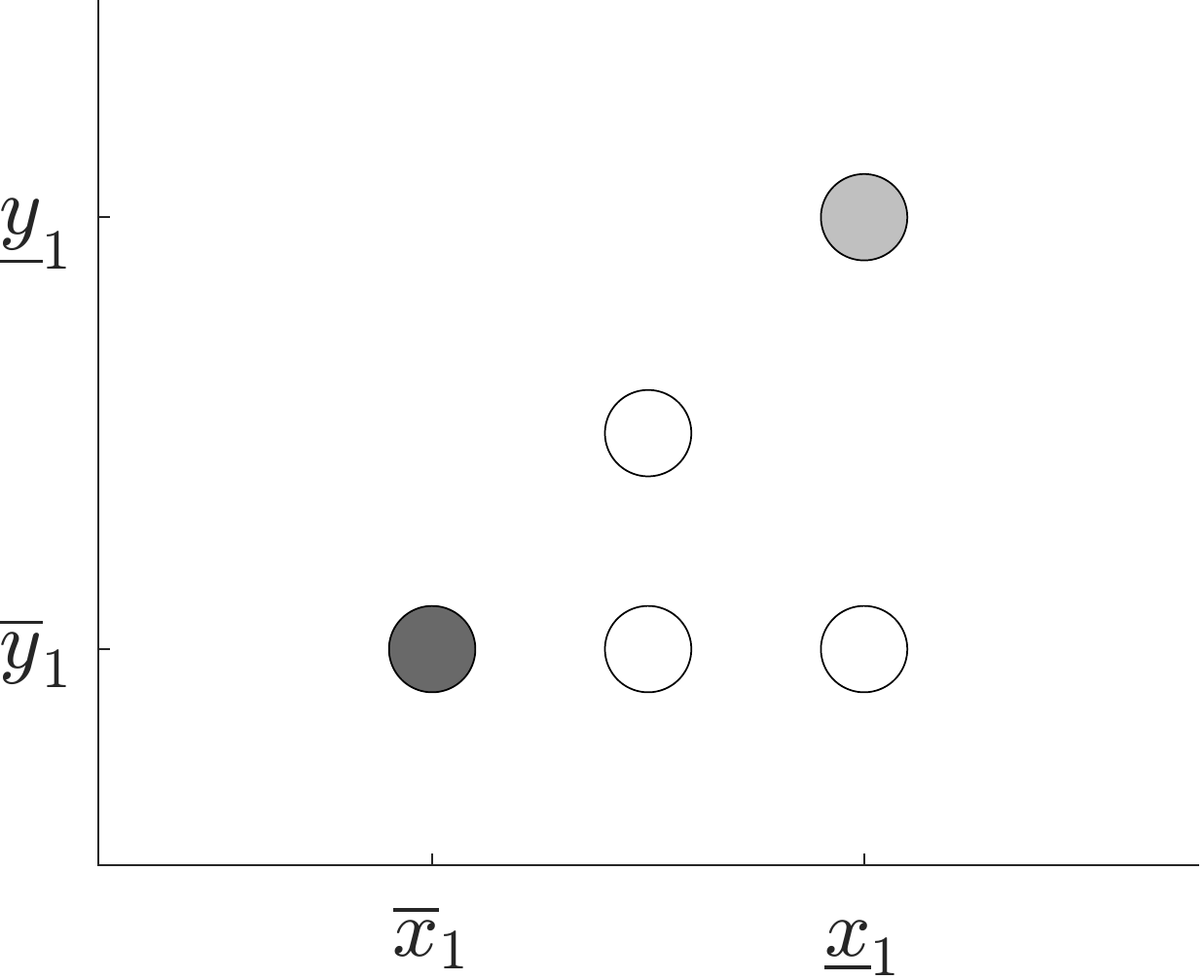} &
  
  \includegraphics[height=0.24\linewidth]{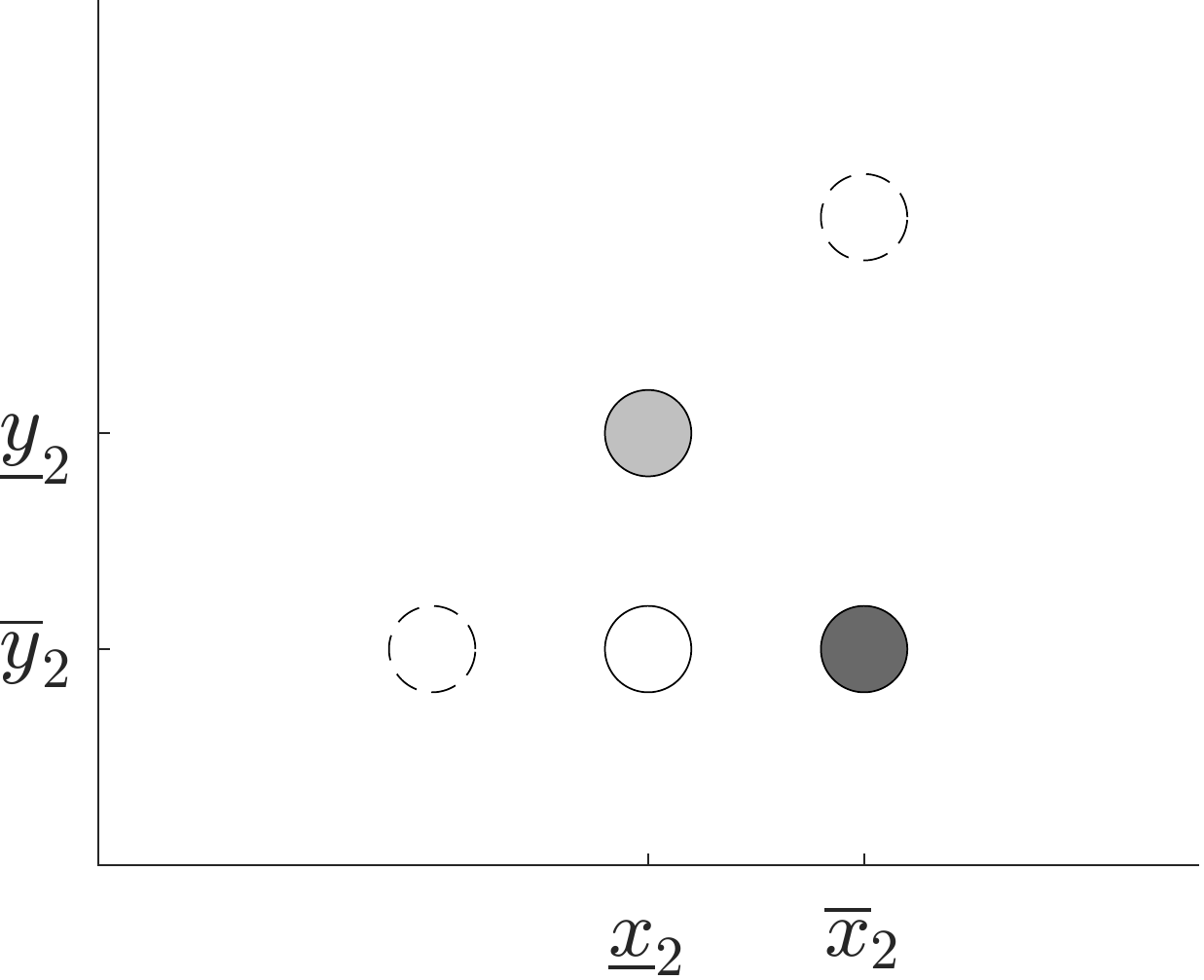} &
 
  \includegraphics[height=0.24\linewidth]{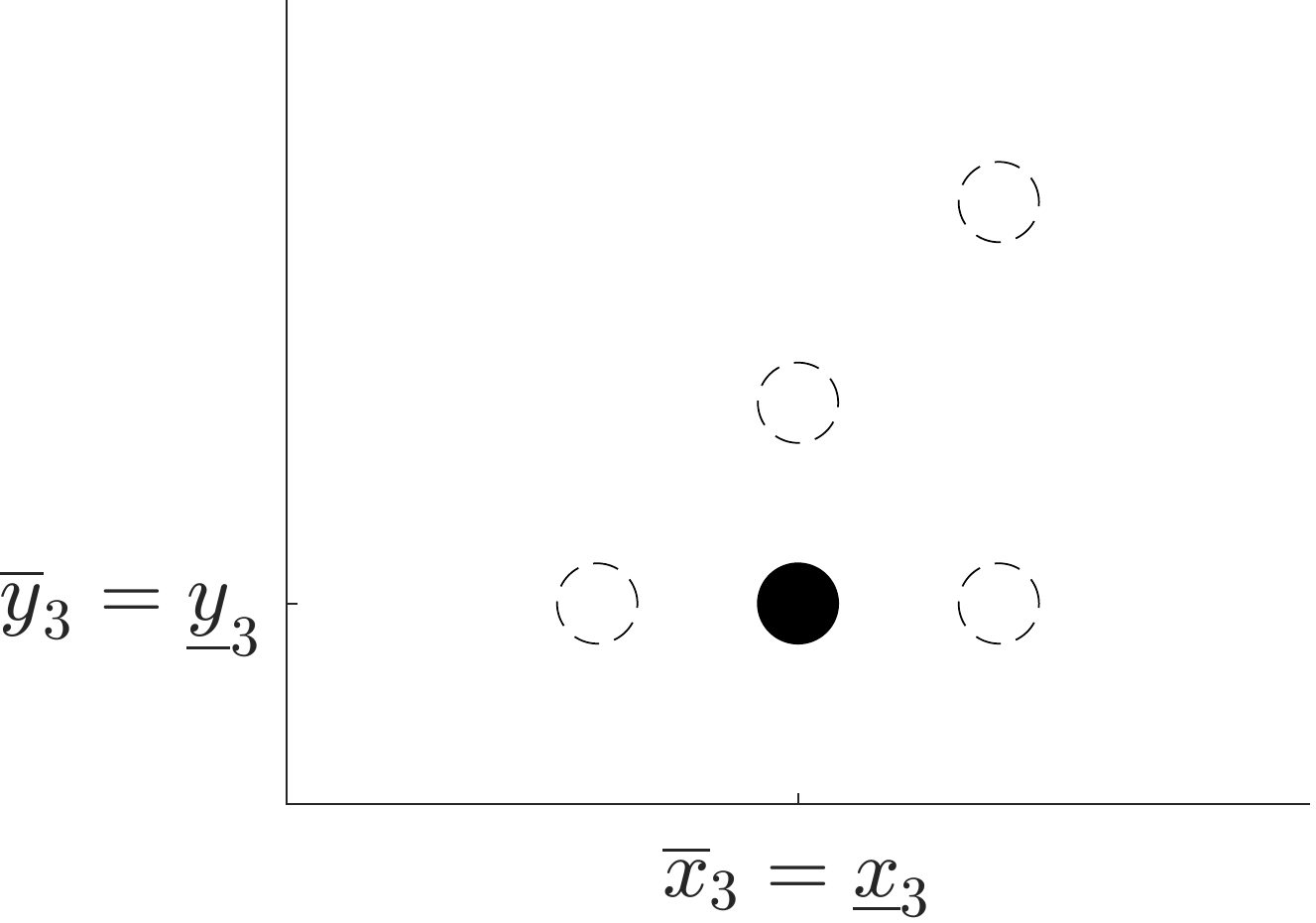}

 \end{tabular}}
\caption{Visual description of the decomposition process used in proof of Theorem \ref{th:linf_ext}. Here, the measures $\mu$ and $\nu$ are one-dimensional and have $3$ points each in their supports. \label{fig:explaindec}}
\end{figure}


In Corollary \ref{cor:winfty}, we bound $W_c^{(\infty)}$ from above with $W_c$.  However, due to the properties of $W^{(\infty)}_c$, it is possible to relate this distance to the Wasserstein cost induced by any $p-$power of the same cost function.

\begin{lemma}
\label{thm:pWinf}
Let $\mu,\nu\in\PP(X)$ and let $c:X\times X \to \erre_+$ be a cost function. Given any $p>0$, it holds true
\[
W^{(\infty)}_{c^p}(\mu,\nu)=\big(W^{(\infty)}_c(\mu,\nu)\big)^p.
\]
\end{lemma}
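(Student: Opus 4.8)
The plan is to show that, for $p>0$, raising the cost to the power $p$ commutes both with the $L^\infty_\pi$ norm (for each fixed plan) and with the infimum over plans, exploiting that $t\mapsto t^p$ is a strictly increasing continuous bijection of $[0,\infty)$ onto itself.

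First I would fix an arbitrary plan $\pi\in\Pi(\mu,\nu)$ and establish the pointwise-in-$\pi$ identity
\[
||c^p||_{L^\infty_\pi}=\big(||c||_{L^\infty_\pi}\big)^p.
\]
In the present discrete setting this is immediate: since $\pi=\sum_{(x,y)}\pi_{x,y}\delta_{(x,y)}$ is a discrete measure, the essential supremum with respect to $\pi$ is just the maximum over the support, so $||c||_{L^\infty_\pi}=\max_{(x,y)\in{\rm spt}(\pi)}c_{x,y}$ and likewise $||c^p||_{L^\infty_\pi}=\max_{(x,y)\in{\rm spt}(\pi)}c_{x,y}^p$. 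Because $t\mapsto t^p$ is increasing on $[0,\infty)$, the maximum of $c_{x,y}^p$ is attained at the same point $(x,y)$ that maximizes $c_{x,y}$, which yields the displayed equality. (In a general polish setting the same conclusion holds because $t\mapsto t^p$ is a homeomorphism of $[0,\infty)$, hence it maps $\pi$-essential suprema to $\pi$-essential suprema.)

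It then remains to take the infimum over $\pi\in\Pi(\mu,\nu)$ on both sides. Using the first step,
\[
W^{(\infty)}_{c^p}(\mu,\nu)=\inf_{\pi\in\Pi(\mu,\nu)}||c^p||_{L^\infty_\pi}=\inf_{\pi\in\Pi(\mu,\nu)}\big(||c||_{L^\infty_\pi}\big)^p.
\]
Since $\phi(t):=t^p$ is continuous and strictly increasing, it commutes with the infimum of the nonnegative quantities $||c||_{L^\infty_\pi}$, i.e. $\inf_\pi\phi(g(\pi))=\phi(\inf_\pi g(\pi))$ for $g(\pi)=||c||_{L^\infty_\pi}$; hence the right-hand side equals $\big(\inf_\pi||c||_{L^\infty_\pi}\big)^p=\big(W^{(\infty)}_c(\mu,\nu)\big)^p$, which is the claim.

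The only point requiring a little care — and the one I would write out explicitly — is the commutation of the power with the infimum in the last display; this is exactly the statement that an increasing continuous function passes through an infimum, which one justifies by a direct monotonicity argument (any plan nearly realizing $\inf_\pi||c||_{L^\infty_\pi}$ nearly realizes the infimum on the left through $\phi$, and conversely, because $\phi$ is increasing and continuous). No other step presents a genuine obstacle, since discreteness of $\pi$ reduces the $L^\infty_\pi$ norm to a maximum over a finite set.
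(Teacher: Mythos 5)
Your proof is correct, and it rests on the same key observation as the paper's: raising to the power $p$ commutes with the $L^{\infty}_\pi$ norm for every fixed plan $\pi$, because $t\mapsto t^p$ is increasing on $[0,\infty)$. The difference is in how the infimum over plans is treated. The paper picks a plan $\pi$ that \emph{attains} $W^{(\infty)}_c(\mu,\nu)$, deduces
\[
W^{(\infty)}_{c^p}(\mu,\nu)\leq ||c^p||_{L^{\infty}_\pi}=\big(||c||_{L^{\infty}_\pi}\big)^p=\big(W^{(\infty)}_c(\mu,\nu)\big)^p,
\]
and then obtains the reverse inequality symmetrically, starting from an optimal plan for $c^p$. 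This presupposes that the infimum in Definition \ref{def_winfty} is attained --- which is true in the finite discrete setting, since $\Pi(\mu,\nu)$ is a compact polytope and $\pi\mapsto \max_{(x,y)\in{\rm spt}(\pi)}c_{x,y}$ is lower semicontinuous, though the paper does not comment on this. Your route sidesteps the attainment question entirely: you push the continuous, strictly increasing bijection $t\mapsto t^p$ through the infimum, which is valid for any family of nonnegative numbers whether or not a minimizer exists. The trade-off is clear: the paper's argument is a two-line squeeze between optimal plans, while yours requires spelling out the (elementary) commutation of a monotone continuous map with an infimum; in exchange, your argument is marginally more general and would survive in settings where optimal plans need not exist.
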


\begin{proof}
Let $\pi\in\Pi(\mu,\nu)$ be a plan such that
\[
T_{c}(\pi)=W_c^{(\infty)}(\mu,\nu),
\]
then
\[
W^{(\infty)}_{c^p}(\mu,\nu)\leq T_{c^p}(\pi)=T_c(\pi)^p=\big(W_c^{(\infty)}(\mu,\nu)\big)^p.
\]
Similarly, one can prove $\big(W_c^{(\infty)}(\mu,\nu)\big)^p\leq W^{(\infty)}_{c^p}(\mu,\nu)$ and conclude the thesis.
\end{proof}
Thanks to Lemma \ref{thm:pWinf}, we are able to prove the following result.

\begin{theorem}
\label{thm:extensionBJM}
Given a cost function $c:X\times X\to [0,\infty)$, let $\mu,\nu\in \PP(X)$ be two discrete measures. For any $p\geq 1$, 
\begin{equation}
\label{eq:extensionBJM}
    W^{(\infty)}_{c}(\mu,\nu)\leq\frac{W_{c_p}(\mu,\nu)}{(\alpha_p)^{\frac{1}{p}}},
\end{equation}
where $\alpha_p$ is the constant defined in \eqref{eq:alphamin}.
\end{theorem}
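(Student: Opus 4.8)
The plan is to apply the estimate \eqref{eq:linfextim} of Corollary \ref{cor:winfty} not to the cost $c$ itself, but to the power cost $c^p$, and then to convert the resulting $W^{(\infty)}_{c^p}$ term back into a power of $W^{(\infty)}_{c}$ by means of the scaling identity of Lemma \ref{thm:pWinf}.

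First I would observe that, for every $p\geq 1$, the map $c^p:X\times X\to[0,\infty)$ is again a nonnegative cost function on the same discrete space, so Theorem \ref{th:linf_ext} and Corollary \ref{cor:winfty} apply verbatim with $c$ replaced by $c^p$. Fixing a trim solution for the cost $c^p$ together with one of its diffusive models, estimate \eqref{eq:linfextim} gives
\[
W_{c^p}(\mu,\nu)\geq \alpha_p\, W^{(\infty)}_{c^p}(\mu,\nu),
\]
where $\alpha_p$ is precisely the quantity \eqref{eq:alphamin} evaluated for this diffusive model. Next I would invoke Lemma \ref{thm:pWinf} to write $W^{(\infty)}_{c^p}(\mu,\nu)=\big(W^{(\infty)}_{c}(\mu,\nu)\big)^p$, and substitute it above to obtain
\[
W_{c^p}(\mu,\nu)\geq \alpha_p\,\big(W^{(\infty)}_{c}(\mu,\nu)\big)^p.
\]
Dividing by $\alpha_p$ and taking $p$-th roots (the root being monotone on $[0,\infty)$) yields
\[
W^{(\infty)}_{c}(\mu,\nu)\leq\left(\frac{W_{c^p}(\mu,\nu)}{\alpha_p}\right)^{1/p}=\frac{W_{c_p}(\mu,\nu)}{(\alpha_p)^{1/p}},
\]
where in the last equality I read $W_{c_p}:=\big(W_{c^p}(\mu,\nu)\big)^{1/p}$, the $p$-normalized transport cost attached to $c$; this is exactly \eqref{eq:extensionBJM}.

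The algebra is entirely routine once the two ingredients are assembled, so I do not expect any serious analytic difficulty. The one point that genuinely requires attention — and is the conceptual heart of the argument — is the decision in the first step to feed the \emph{power} cost $c^p$ into Corollary \ref{cor:winfty}; it is precisely this choice that lets Lemma \ref{thm:pWinf} collapse the $W^{(\infty)}_{c^p}$ factor into $\big(W^{(\infty)}_{c}\big)^p$ and so align with the left-hand side of the claim. I would also remark that $\alpha_p$ depends on the chosen diffusive model, but since $W_{c^p}$ and $W^{(\infty)}_{c}$ do not, the inequality is valid for any trim plan for $c^p$; invoking the uniform lower bound of Corollary \ref{crl:alphaunif} for $\alpha_p$ would upgrade \eqref{eq:extensionBJM} to an estimate whose constant depends only on $\mu$ and $\nu$.
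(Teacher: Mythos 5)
Your proposal is correct and follows essentially the same route as the paper: apply Corollary \ref{cor:winfty} to the power cost $c^p$, collapse $W^{(\infty)}_{c^p}$ into $\big(W^{(\infty)}_{c}\big)^p$ via Lemma \ref{thm:pWinf}, and take $p$-th roots. Your reading of $W_{c_p}$ as $\big(\inf_\pi \int c^p\,{\rm d}\pi\big)^{1/p}$ is also the interpretation the paper's own proof implicitly uses, and your closing remark about invoking Corollary \ref{crl:alphaunif} for a model-independent constant matches the paper's discussion following the theorem.
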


\begin{proof}
Given a $p\geq 1$, let us denote with $\pi^{(p)}$ the trim optimal transportation plan between $\mu$ and $\nu$ according to the cost function $c_p$. Given a diffusive model for $\pi^{(p)}$, we denote with $\alpha_p$ the constant defined in \eqref{eq:alphamin}. From Lemma \ref{thm:pWinf} we have
\[
W^{(\infty)}_{c_p}(\mu,\nu)=(W^{(\infty)}_c(\mu,\nu))^p,
\]
hence, for any $p$, we have
\[
(W^{(\infty)}_c(\mu,\nu))^p=W^{(\infty)}_{c_p}(\mu,\nu)\leq\frac{W^p_{c_p}(\mu,\nu)}{\alpha_p},
\]
i.e.,
\[
W^{(\infty)}_c(\mu,\nu)\leq\frac{W_{c_p}(\mu,\nu)}{(\alpha_p)^{\frac{1}{p}}}.
\]
\end{proof}

In particular, since the constant $\alpha$ from Corollary \ref{crl:alphaunif} bounds from below every $\alpha_p$ and does not depend on the cost function but only on the starting measures $\mu$ and $\nu$, we have
\[
W^{(\infty)}_c(\mu,\nu)\leq\frac{W_{c_p}(\mu,\nu)}{(\alpha)^{\frac{1}{p}}}
\]
for any $p\geq 1$. In particular, if we take
\[
c(\x,\y):=\sqrt{\sum_{i=1}^n|x_i-y_i|^2},
\]
we recover the bound proposed in Theorem \ref{thm:bouchitte} for discrete measures.

\begin{remark}
The estimate in \eqref{eq:extensionBJM} is sharp. To prove it, let us take
\[
\mu=\delta_a \quad\quad \text{and}\quad\quad \nu=\delta_b
\]
where $a,b\in \erre^n$. By definition \eqref{eq:alphamin}, we have $\alpha =1$. Moreover, it is easy to see that
\[
W^{(\infty)}(\mu,\nu)=|a-b|\quad \quad \text{and}\quad \quad W_p(\mu,\nu)=|a-b|,
\]
which proves the sharpness of inequality \eqref{eq:linfextim}.
\end{remark}

\subsubsection*{Acknowledgements}
We are deeply indebted to Filippo Santambrogio for introducing us to the work of Bouchitté, Jimenez, and Mahadevan and for several stimulating discussions and valuable suggestions. We thank Stefano Gualandi for his feedback and Gabriele Loli for enhancing the images of this paper.

\bibliographystyle{plain}
\bibliography{ref.bib}

\begin{thebibliography}{10}

\bibitem{struct}
Taoufiq Abdellaoui and Henri Heinich.
\newblock Caract\'erisation d'une solution optimale au probl\`eme de
  {M}onge-{K}antorovitch.
\newblock {\em Bulletin de la Soci\'et\'e Math\'ematique de France},
  127(3):429--443, 1999.

\bibitem{CUESTAALBERTOS199786}
J.A.~Cuesta Albertos, C.~Matr\'an, and A.~Tuero-Dı\a'az.
\newblock On the monotonicity of optimal transportation plans.
\newblock {\em Journal of Mathematical Analysis and Applications},
  215(1):86--94, 1997.

\bibitem{AGS}
Luigi Ambrosio, Nicola Gigli, and Giuseppe Savar{\'e}.
\newblock {\em Gradient flows: In metric spaces and in the space of probability
  measures}.
\newblock Birkh\"auser Basel, 2008.

\bibitem{pmlr-v70-arjovsky17a}
Martin Arjovsky, Soumith Chintala, and L{\'e}on Bottou.
\newblock {W}asserstein generative adversarial networks.
\newblock {\em Proceedings of Machine Learning Research}, 70:214--223, 06--11
  Aug 2017.

\bibitem{RePEc:eee:stapro:v:76:y:2006:i:12:p:1298-1302}
Federico Bassetti, Antonella Bodini, and Eugenio Regazzini.
\newblock On minimum {K}antorovich distance estimators.
\newblock {\em Statistics and Probability Letters}, 76(12):1298--1302, 2006.

\bibitem{Bassetti2005}
Federico Bassetti and Eugenio Regazzini.
\newblock Asymptotic properties and robustness of minimum dissimilarity
  estimators of location-scale parameters.
\newblock {\em Society for Industrial and Applied Mathematics}, 50:312--330, 01
  2005.

\bibitem{bouchitte07}
Guy Bouchitté, Chloé Jimenez, and Rajesh Mahadevan.
\newblock A new ${L}^{\infty}$ estimate in optimal mass transport.
\newblock {\em Proceedings of the American Mathematical Society},
  135:3525--3535, 11 2007.

\bibitem{Brenier1991}
Yann Brenier.
\newblock On the translocation of masses.
\newblock {\em Communications on pure and applied mathematics}, 44(4):375--417,
  1991.

\bibitem{10.2307/827090}
Luis~A. Caffarelli, Mikhail Feldman, and Robert~J. McCann.
\newblock Constructing optimal maps for {M}onge's transport problem as a limit
  of strictly convex costs.
\newblock {\em Journal of the American Mathematical Society}, 15(1):1--26,
  2002.

\bibitem{Cuturi2014}
Marco Cuturi and Arnaud Doucet.
\newblock Fast computation of {W}asserstein barycenters.
\newblock {\em Proceedings of Machine Learning Research}, 32(2):685--693,
  22--24 Jun 2014.

\bibitem{10.5555/248375}
George~B. Dantzig and Mukund~N. Thapa.
\newblock {\em Linear Programming 1: Introduction}.
\newblock Springer-Verlag, Berlin, Heidelberg, 1997.

\bibitem{dobrushin1989dynamical}
Roland Dobrushin.
\newblock Vlasov equations.
\newblock {\em Funct. Anal. Appl.}, 13(2):115–123, 1979.

\bibitem{figalli2007existence}
Alessio Figalli.
\newblock Existence, uniqueness, and regularity of optimal transport maps.
\newblock {\em SIAM journal on mathematical analysis}, 39(1):126--137, 2007.

\bibitem{Frogner2015}
Charlie Frogner, Chiyuan Zhang, Hossein Mobahi, Mauricio Araya, and Tomaso~A
  Poggio.
\newblock Learning with a {W}asserstein loss.
\newblock In {\em Advances in Neural Information Processing Systems}, pages
  2053--2061, 2015.

\bibitem{uniqueness}
Wilfred Gangbo and Robert~J. McCann.
\newblock The geometry of optimal transportation.
\newblock {\em Acta Mathematica}, 177(177):113–161, 1996.

\bibitem{kantorovich1960mathematical}
Leonid~V. Kantorovich.
\newblock Mathematical methods of organizing and planning production.
\newblock {\em Management science}, 6(4):366--422, 1960.

\bibitem{kantorovich2006translocation}
Leonid~V. Kantorovich.
\newblock On the translocation of masses.
\newblock {\em Journal of Mathematical Sciences}, 133(4):1381--1382, 2006.

\bibitem{Levina2001}
Elizaveta Levina and Peter Bickel.
\newblock The {E}arth {M}over's {D}istance is the {M}allows distance: Some
  insights from statistics.
\newblock {\em Proceedings of the IEEE International Conference on Computer
  Vision}, 2:251 -- 256 vol.2, 02 2001.

\bibitem{loeper2009regularity}
Gr{\'e}goire Loeper et~al.
\newblock On the regularity of solutions of optimal transportation problems.
\newblock {\em Acta mathematica}, 202(2):241--283, 2009.

\bibitem{Monge1781}
Gaspard Monge.
\newblock M{\'e}moire sur la th{\'e}orie des d{\'e}blais et des remblais.
\newblock {\em Histoire de l'Acad{\'e}mie Royale des Sciences de Paris}, 1781.

\bibitem{Murata1974}
Hiroshi Murata, Hiroshi;~Tanaka.
\newblock An inequality for certain functional of multidimensional probability
  distributions.
\newblock {\em Hiroshima Math}, 4(1):75--81, 1974.

\bibitem{Pele2009}
Ofir Pele and Michael Werman.
\newblock Fast and robust {E}arth {M}over's {D}istances.
\newblock In {\em 2009 IEEE 12th International Conference on Computer Vision},
  pages 460--467. IEEE, 2009.

\bibitem{Rubner1998}
Yossi Rubner, Carlo Tomasi, and Leonidas Guibas.
\newblock Metric for distributions with applications to image databases.
\newblock {\em Proceedings of the IEEE International Conference on Computer
  Vision}, pages 59--66, 02 1998.

\bibitem{Rubner2000}
Yossi Rubner, Carlo Tomasi, and Leonidas~J Guibas.
\newblock The {E}arth {M}over's {D}istance as a metric for image retrieval.
\newblock {\em International Journal of Computer Vision}, 40(2):99--121, 2000.

\bibitem{RePEc:eee:jmvana:v:32:y:1990:i:1:p:48-54}
L.~Rüschendorf and S.~T. Rachev.
\newblock {A characterization of random variables with minimum L2-distance}.
\newblock {\em Journal of Multivariate Analysis}, 32(1):48--54, 1990.

\bibitem{Santambrogio2015}
Filippo Santambrogio.
\newblock Optimal transport for applied mathematicians.
\newblock {\em Birk{\"a}user, NY}, pages 99--102, 2015.

\bibitem{pmlr-v32-solomon14}
Justin Solomon, Raif Rustamov, Leonidas Guibas, and Adrian Butscher.
\newblock Wasserstein propagation for semi-supervised learning.
\newblock {\em Proceedings of Machine Learning Research}, 32(1):306--314,
  22--24 Jun 2014.

\bibitem{Tanaka1973}
Hiroshi Tanaka.
\newblock An inequality for a functional of probability distributions and its
  application to {K}ac's one-dimensional model of a {M}axwellian gas.
\newblock {\em Zeitschrift für Wahrscheinlichkeitstheorie und Verwandte
  Gebiete}, 27:47–52, 1973.

\bibitem{Tanaka1978}
Hiroshi Tanaka.
\newblock Probabilistic treatment of the {B}oltzmann equation of {M}axwellian
  molecules.
\newblock {\em Zeitschrift für Wahrscheinlichkeitstheorie und Verwandte
  Gebiete}, 46:67–105, 1978.

\bibitem{Villani2008}
C{\'e}dric Villani.
\newblock {\em Optimal transport: old and new}, volume 338.
\newblock Springer-Verlag, Berlin Heidelberg, 2008.

\end{thebibliography}
\end{document}